\documentclass[preprint]{elsarticle}

\usepackage[english]{babel}
\usepackage[utf8]{inputenc}

\usepackage{amsmath}
\usepackage{amssymb}
\usepackage{amsthm}
\usepackage{natbib}
\usepackage{stmaryrd}

\newdefinition{definition}{Definition}

\newtheorem{theorem}{Theorem}
\newtheorem{corollary}[theorem]{Corollary}
\newtheorem{example}[theorem]{Example}
\newtheorem{lemma}[theorem]{Lemma}
\newtheorem{proposition}[theorem]{Proposition}
\newtheorem{remark}[theorem]{Remark}

\newenvironment{myproof}[1]{%
    \paragraph{%
        \textbf{Proof of Theorem \ref{#1}}
    }
}{\hfill$\square$}

\newcommand{\trace}[1]{\textup{tr}\,#1}
\newcommand{\divergence}[1]{\textup{div}\,#1}
\newcommand{\Rm}{\textup{Rm}}
\newcommand{\Ric}{\textup{Ric}}

\title{Rigidity results for quotient almost Yamabe solitons}

\author{Marcelo Bezerra Barboza}
\ead{marcelo.barboza@ifgoiano.edu.br}
\address{Rodovia Geraldo Silva Nascimento, Km 2.5, Uruta\'{i}, Goi\'{a}s,
Brasil, 75790-000.}

\author{Willian Isao Tokura}
\ead{willianisaotokura@hotmail.com}
\address{Universidade Federal de Goi\'{a}s, IME, 131, 74001-970, Goi\^{a}nia,
GO, Brazil.}

\author{Elismar Dias Batista}
\ead{elismardb@gmail.com}
\address{Universidade Federal de Goi\'{a}s, IME, 131, 74001-970, Goi\^{a}nia,
GO, Brazil.}

\author{Priscila Marques Kai}
\ead{priscila.kai@hotmail.com}
\address{Universidade Federal de Goi\'{a}s, INF, s/n, 74690-900, Goi\^{a}nia,
GO, Brazil.}

\begin{document}

\begin{abstract}
    In this paper we investigate the structure of certain solutions of the fully
    nonlinear Yamabe flow, which we call quotient almost Yamabe solitons because
    they extend quite naturally those called quotient Yamabe solitons. We then
    present sufficient conditions for a compact quotient almost Yamabe soliton
    to be either trivial or isometric with an Euclidean sphere. We also
    characterize noncompact quotient gradient almost Yamabe solitons satisfying
    certain conditions on both its Ricci tensor and potential function.
\end{abstract}

\begin{keyword}
    Quotient almost Yamabe solitons \sep Yamabe solitons \sep
    $\sigma_{k}$-curvature \sep rigidity results \sep noncompact manifolds
    \MSC[2010] 53C21 \sep 53C50 \sep 53C25
\end{keyword}

\maketitle

\section{Introduction and main results}\label{sec:intro}

The Yamabe flow
\begin{equation}\label{yflow}
    \dfrac{\partial g^t}{\partial t}=-(R_{g^t}-r_{g^t})g^t,\quad g^0=g_{0},
\end{equation}
where $ R_{g^t} $ is the scalar curvature of $ g^t $ and
\[
    r_{g^t}
    =
    \dfrac{
        {
            \displaystyle
            \int_MR_{g^t}dv_{g^t}
        }
    }{
        {
            \displaystyle
            \int_Mdv_{g^t}
        }
    },
\]
is the mean value of $ R_{g^t} $ along $ M^n $, was introduced by R. Hamilton
\cite{hamilton1988ricci} and has become one of the standard tools of recent
differential geometry. Yamabe solitons arise as self-similar solutions of
\eqref{yflow}.

\begin{definition}
    A solution $g^t$ of \eqref{yflow} is a Yamabe soliton if there exist a
    smooth function $\alpha:[0,\varepsilon)\rightarrow(0,\infty)$, $
    \varepsilon>0 $, and a $1$-parameter family $\{\psi_t\}$ of diffeomorfisms
    of $M^n$ such that
    \[
        g^t=\alpha(t)\psi_{t}^{\ast}(g_{0}),
        \quad\alpha(0)=1\quad\mbox{and}\quad\psi_0=id_M.
    \]
\end{definition}

One gets
\begin{equation}\label{eqeass}
    \frac{1}{2}\mathcal{L}_{X}g=\left(R_g-\lambda\right)g,
\end{equation}
by substituting $ g^t=\alpha(t)\psi_t^\ast(g_0) $ into \eqref{yflow} and
evaluating the resulting expression at $ t=0 $, where $ \mathcal{L}_Xg $ is the
Lie derivative of $ g $ with respect to the field $ X $ of directions associated
with the $ 1 $-parameter family $ \{ \psi_t \} $ and $ \lambda=\alpha'(0)+r_{g}
$. Equation \eqref{eqeass} is the fundamental equation of Yamabe solitons.
Since their beginning, a lot has been proved about the nature of Yamabe
solitons. For example, Chow \cite{chow1992yamabe} proved that
compact Yamabe solitons have constant scalar curvature (see also
\cite{di2008yamabe, hsu2012note}). Daskalopoulos and Sesum
\cite{daskalopoulos2013classification} proved that complete locally conformally
flat Yamabe solitons with positive sectional curvature are rotationally
symmetric and must belong to the conformal class of flat Euclidean space.

A new soliton is born if one replaces the scalar curvature in \eqref{yflow} by
functions of the higher order scalar curvatures. As is the case with any
generalization, it's hoped that one recovers the old objects as particular
instances of the new ones, while opening room for new and exciting fenomena to
happen. In what follows we give formal definitions and even before we state out
main results, we examine a few examples. We included a section containing the
lemmas that we have used in the text for the convenience of the reader and a
separate section with the proofs to our statements can be found right after it.

The Riemann curvature tensor $\Rm$ of $ (M^n,g) $ admits the following
decomposition
\[
    \Rm=W_g+A_{g}\owedge g,
\]
where $ W_g $ and $ A_g $ are the tensors of Weyl and Schouten, respectively,
and $\owedge$ is the Kulkarni-Nomizu product of $ (M^n,g) $. Recall that the
Schouten tensor is given by
\[
    A_g=\frac{1}{n-2}\left(\Ric_{g}-\frac{R_g}{2(n-1)}g\right).
\]
The $ \sigma_k $-curvature of $ g $ is defined as the $ k $-th elementary
symmetric function of the eigenvalues $\lambda_{1},\dots,\lambda_{n}$ of the
endomorphism $ g^{-1}A_g $, that is,
\begin{equation*}
    \sigma_{k}(g)=
    \sigma_{k}(g^{-1}A_{g})=
    \sum_{1\leqslant i_{1}<\dots<i_{k}\leqslant n}
    \lambda_{i_{1}}\cdots\lambda_{i_{k}},\quad1\leqslant k\leqslant n.
\end{equation*}
Here we set $\sigma_{0}(g)=1$ for convenience. A simple calculation shows that
$\sigma_{1}(g)=\frac{R_g}{2(n-1)}$ which indicates that the
$\sigma_{k}$-curvature is a reasonable substitute for the scalar curvature of $
(M^n,g) $ in \eqref{yflow}.

Guan and Guofang introduced \cite{guan2004geometric} the fully nonlinear flow
\begin{equation}\label{flow}
    \dfrac{\partial g^t}{\partial t}
    =
    -\left(
        \log\frac{\sigma_{k}(g^t)}{\sigma_{l}(g^t)}-\log r_{k,l}(g^t)
    \right)g^t,\quad
    g^0=g_{0},
\end{equation}
where
\[
    \log r_{k,l}(g^t)=
    \dfrac{
            {
                \displaystyle
                 \int_{M}
                     \sigma_{l}(g^t)\log\dfrac{\sigma_{k}(g^t)}{\sigma_{l}(g^t)}
                 dv_{g^t}
            }
    }{
        {\displaystyle\int_{M}\sigma_{l}(g^t)dv_{g^t}}
    },
\]
was defined as to make the flow preserve the quantities
\[
    \begin{array}{rcll}
        \mathcal{E}_l(g^t)
        &=&
        {\displaystyle\int_{M}\sigma_{l}(g^t)dv_{g^t}},
        &
        l\neq\frac{n}{2},
        \\
        &&&\\
        &=&
        -{
            \displaystyle
            \int_{0}^{1}dt\int_{M}u\sigma_{\frac{n}{2}}(g^t)dv_{g^t}
         },
        &
        l=\frac{n}{2},
    \end{array}
\]
where $u\in C^\infty(M)$, $g=e^{-2u}g_0$ and $g^t=e^{-2tu}g_0$. The convergence
of the fully nonlinear flow was then proved under certain conditions to be
satisfied by the eigenvalues of the Schouten tensor. The authors also provided
geometric inequalities such as the Sobolev-type inequality in case $0\leqslant
l<k<\frac{n}{2}$, the conformal quasimass-integral-type inequality for
$\frac{n}{2}\leqslant k\leqslant n$, $1\leqslant l < k$ and the
Moser–Trudinger-type inequality for $k=\frac{n}{2}$.

Bo \textit{et al.} \cite{bo2018k} presented quotient Yamabe solitons as
self-similar solutions of the flow \eqref{flow} and stated rigidity results for
the existance of such objects on top of locally conformally flat manifolds. For
example, it was shown that any compact and locally conformally flat manifold
with the structure of a quotient Yamabe soliton, where both $\sigma_k>0$ and
$\sigma_l>0$, must have constant quotient curvature
$\frac{\sigma_{k}}{\sigma_{l}}$. Also, for the so called gradient $k$-Yamabe
soliton ($ l=0 $) they proved that, for $ k>1 $, any compact gradient $k$-Yamabe
soliton with negative constant scalar curvature necessarily has constant
$\sigma_{k}$-curvature. Almost Yamabe solitons were introduced by Barbosa and
Ribeiro \cite{barbosa2013conformal} as generalizations of self-similar solutions
of the Yamabe flow. Essentially, they allowed the parameter $\lambda$ in
\eqref{eqeass} to be a function on $M$. The authors then stated rigidity results
for almost Yamabe solitons on compact manifolds.  We refer the reader to
\cite{barbosa2013conformal, de2020almost, pirhadi2017almost,
seko2019classification} for further information.

In \cite{catino2012global} Catino \textit{et al.} proposed the study of
conformal solitons. A conformal soliton is a Riemannian manifold $(M^n,g)$
together with a nonconstant function $ f\in C^\infty(M) $ satisfying
$\nabla^2f=\lambda g$ for some $ \lambda\in\mathbb{R} $. They provided
classification results according to the number of critical points of $f$. It
should be noticed that solitons of Yamabe, $k$-Yamabe and quotient Yamabe types
are examples of conformal solitons.

We introduce quotient almost Yamabe solitons in extension to the quotient Yamabe
solitons.

\begin{definition}\label{def1}
    A solution $ g^t $ of \eqref{flow} is a quotient almost Yamabe soliton if
    there exist a function $ \alpha:M\times[0,\varepsilon)\rightarrow(0,\infty)
    $, $ \varepsilon>0 $, and a $ 1 $-parameter family $ \{\psi_t\} $ of
    diffeomorfisms of $ M^n $ such that
    \[
        g^t=\alpha(x,t)\psi_{t}^{\ast}(g_{0}),\quad
        \alpha(\,\cdot\,,0)\equiv1\mbox{ on }M^n\quad\mbox{and}\quad\psi_0=id_M.
    \]
    Equivalently, $(M^n,g)$ is a quotient almost Yamabe soliton if there exists
    a pair $X\in \mathfrak{X}(M)$, $\lambda\in C^\infty(M)$ satisfying
    \begin{equation}\label{eqeas}
        \frac{1}{2}\mathcal{L}_{X}g=
        \left(
        \log\frac{\sigma_{k}}{\sigma_{l}}-\lambda
        \right)g,\quad
        \sigma_{k}\cdot\sigma_{l}>0.
    \end{equation}
\end{definition}

We will write the soliton in \eqref{eqeas} as $(M^{n}, g, X, \lambda)$ for the
sake of simplicity. Following the terminology already in use with almost Yamabe
solitons, a soliton $ (M^n,g,X,\lambda) $ will be called:
\begin{enumerate}[a.]
    \item \textit{expanding} if $ \lambda<0 $,
    \item \textit{steady} if $ \lambda=0 $,
    \item \textit{shrinking} if $ \lambda>0 $ and, finally,
    \item \textit{indefinite} if $ \lambda $ change signs on $ M^n $.
\end{enumerate}

\begin{definition}
    A quotient gradient almost Yamabe soliton is a quotient almost Yamabe
    soliton $(M^n,g,X,\lambda)$ such that $ X=\nabla f $ is the gradient field
    of a function $ f\in C^\infty(M) $.
\end{definition}

Since
\[
    \dfrac{1}{2}\mathcal{L}_{\nabla f}g=\nabla^2 f,
\]
it follows from \eqref{eqeas} that a quotient gradient almost Yamabe soliton $
(M^n,g,\nabla f,\lambda) $ is characterized by the equation
\begin{equation}\label{eq fundamental}
    \nabla^2 f=
    \left(
    \log\frac{\sigma_{k}}{\sigma_{l}}-\lambda
    \right)g,
    \quad
    \sigma_{k}\cdot\sigma_{l}>0.
\end{equation}
Quotient almost Yamabe solitons, gradient or not, are regarded as
\textit{trivial} if their defining equation vanishes identically. Thus, $
(M^n,g,X,\lambda) $ is trivial if $ \mathcal{L}_Xg=0 $ and $ (M^n,g,\nabla
f,\lambda) $ if $ \nabla^2 f=0 $. In either case, $
\log{\frac{\sigma_k}{\sigma_l}}-\lambda=0 $.

Before we state our main results, let's take a look at some examples.

\begin{example}\label{exemplo22}
    The product manifold
    $(\mathbb{R}\times\mathbb{S}^{n},g=dt^2+g_{\mathbb{R}^n})$ alongside the
    function
    \[
        f:\mathbb{R}\times\mathbb{S}^n\to\mathbb{R},\quad
        (t,x)\mapsto f(t,x)=at+b\quad(a,b\in\mathbb{R}),
    \]
    is, for $ k=l=1 $, a trivial quotient gradient almost Yamabe soliton with $
    \lambda=0 $, since $\sigma_{1}(g^{-1}A_{g})=\frac{n}{2}$ and $ \nabla^2
    f=0 $.
\end{example}

\begin{example}
    Identities
    \[
        \Ric_{g_{\mathbb{S}^{n}}}=(n-1)g_{\mathbb{S}^{n}},\quad
        R_{g_{\mathbb{S}^{n}}}=n(n-1)\quad\mbox{and}\quad
        A_{g_{\mathbb{S}^n}}=\frac{1}{2}g_{\mathbb{S}^n},
    \]
    rule the Ricci tensor, scalar curvature and Schouten tensor, respectively,
    of the Euclidean sphere $ (\mathbb{S}^n,g_{\mathbb{S}^n}) $. Therefore, we
    have that
    \[
        \sigma_{k}(g_{\mathbb{S}^{n}}^{-1}A_{g_{\mathbb{S}^{n}}})=
        \frac{1}{2^k}\binom{n}{k},\quad1\leqslant k\leqslant n.
    \]
    Consider the height function
    \[
        h_v:\mathbb{S}^n\rightarrow\mathbb{R},\quad
        x\mapsto h_{v}(x)=\langle x,v\rangle,
    \]
    on $ \mathbb{S}^n $ with respect to a given $ v\in\mathbb{S}^n $. It then
    follows that
    \[
        \nabla^{2}h_{v}=-h_{v}g_{g_{\mathbb{S}^n}},
    \]
    showing that $(\mathbb{S}^{n},g_{\mathbb{S}^{n}}, \nabla h_{v},\lambda)$ is
    a compact quotient almost Yamabe soliton with
    \[
        \lambda:\mathbb{S}^n\to\mathbb{R},\quad
        x\mapsto h_v(x)+\log{\frac{\sigma_k}{\sigma_l}}.
    \]
\end{example}

\begin{example}
    On the hyperbolic space $(\mathbb{H}^{n},g_{\mathbb{H}^{n}})$ one has
    \[
        \Ric_{g_{\mathbb{H}^{n}}}=-(n-1)g_{\mathbb{H}^{n}},\quad
        R_{g_{\mathbb{H}^{n}}}=-n(n-1)\quad\mbox{and}\quad
        A_{g_{\mathbb{H}^{n}}}=-\frac{1}{2}g_{\mathbb{H}^{n}},
    \]
    for its Ricci tensor, scalar curvature and Schouten tensor, respectevily.
    Therefore, we have that
    \[
        \sigma_{k}(g_{\mathbb{H}^{n}}^{-1}A_{g_{\mathbb{H}^{n}}})=
        \frac{(-1)^{k}}{2^k}\binom{n}{k},\quad1\leqslant k\leqslant n.
    \]
    We consider the model $\mathbb{H}^{n}=\{x\in\mathbb{R}^{n,1}: \langle
    x,x\rangle_{0}=-1, x_{1}>0\}$ of the hyperbolic space, where
    $\mathbb{R}^{n,1}$ is nothing but the Euclidean space $\mathbb{R}^{n+1}$
    endowed with lorentzian inner product $\langle
    x,x\rangle_{0}=-x_{1}^2+x_{2}^{2}+\dots+x_{n+1}^{2}$. As in our previous
    example, we consider the height function
    \[
        h_v:\mathbb{H}^n\rightarrow\mathbb{R},\quad
        x\mapsto h_{v}(x)=\langle x,v\rangle_{0},
    \]
    on $ \mathbb{H}^n $ with respect to a given $
    v\in\mathbb{H}^n$. Because
    \[
        \nabla^{2}h_{v}=h_{v}g_{g_{\mathbb{H}^n}},
    \]
    we conclude that $(\mathbb{H}^{n},g_{\mathbb{H}^{n}}, \nabla h_{v},\lambda)$
    is a quotient almost Yamabe soliton with
    \[
        \lambda:\mathbb{H}^n\to\mathbb{R},\quad
        x\mapsto-h_v(x)+\log{\frac{\sigma_k}{\sigma_l}},
    \]
    as long as we have $ k\equiv l\pmod2 $.
\end{example}

\begin{example}
    Consider $ \mathbb{R}^n $ endowed with a metric tensor of the form
    \[
        g_{ij}=e^{2u_i}\delta_{ij},\quad1\leqslant i,j\leqslant n,
    \]
    in cartesian coordinates $ x=(x_1,\ldots,x_n) $ of $ \mathbb{R}^n $ where $
    u_1,\ldots,u_n\in C^\infty(\mathbb{R}^n) $. Then, the Ricci tensor of $
    (\mathbb{R}^n,g) $ is ruled \cite{landau_classical_1975} by the formulas
    \[
        \begin{array}{rcl}
            \Ric_g(\partial_j,\partial_k)
            &=&
            \sum_{l\neq k,j}\,U_{jk}^l+u_{j,k}u_{l,j},
            \quad j\neq k,
            \\
            &&\\
            \Ric_g(\partial_k,\partial_k)
            &=&
            \sum_{l\neq k}\,
            e^{2(u_k-u_l)}U_{ll}^k+
            U_{kk}^l-
            \sum_{m\neq k,l}\,e^{2(u_k-u_m)}u_{k,m}u_{l,m},
        \end{array}
    \]
    where
    \[
        u_{i,j}=\dfrac{\partial u_i}{\partial x_j},\quad\mbox{and}\quad
        u_{i,j,k}=\dfrac{\partial^2u_i}{\partial x_k\partial x_j},\quad
        1\leqslant i,j,k\leqslant n,
    \]
    and $ U_{jk}^l=u_{l,k}(u_k-u_l)_{,j}-u_{l,j,k} $. Assume that $ 4\leqslant
    n\in\mathbb{Z} $. Also, let $ \tau $ be the $ n $-cycle $ (1,2,3,\ldots,n) $
    in the symmetric group $ S_n $. It turns out that by choosing
    \[
        \begin{array}{rcll}
            u_i(x_1,\ldots,x_n)
            &=&
            \log{\cosh{\left(x_{\tau(i)}\right)}},
            &
            i\equiv0\pmod2,
            \\
            &=&
            0,
            &
            i\equiv1\pmod2,
        \end{array}
    \]
    we simplify the situation a little bit and the Ricci tensor of $
    (\mathbb{R}^n,g) $ ends up being a constant multiple of the metric, $
    \Ric_g=-g $. Therefore, $ (\mathbb{R}^n,g) $ is a complete Einstein manifold
    and, as such, $ A_g=\frac{-1}{2(n-1)}g $. Then, we have that
    \[
        \sigma_k(g^{-1}A_g)=\dfrac{(-1)^k}{2^k(n-1)^k}\binom{n}{k},\quad
        1\leqslant k\leqslant n.
    \]
    Because $ X=(0,1,\ldots,0,1) $ is a Killing field on $ (\mathbb{R}^n,g) $ we
    know that $ (\mathbb{R}^n,g,X,\lambda) $ is a trivial quotient almost Yamabe
    soliton with whenever $ k\equiv l\pmod2 $. It should be mentioned that $ X $
    is not a gradient field with respect to the metric $ g $.
\end{example}

Any smooth vector field $ X $ on a compact Riemannian manifold $ (M^n,g) $ can
be written in the form
\begin{equation}\label{Hodge}
    X = \nabla h + Y,
\end{equation}
where $Y\in \mathfrak{X}(M)$ is divergence free and $h\in C^\infty(M)$. In fact,
by the Hodge-de Rham Theorem \cite{warner2013foundations} we have that
\[
    X^{\flat}=d\alpha+\delta\beta+\gamma.
\]
Now, take $Y = (\delta\beta+\gamma)^{\sharp},\,\nabla h=(d\alpha)^{\sharp}$ and
we are done. The function $ h $ is called the Hodge-de Rham potential of $ X $.
Our first theorem states the triviality of a compact quotient almost Yamabe
soliton under certain integral assumptions.

\begin{theorem}\label{Te1}
    A compact quotient almost Yamabe soliton $(M^n,g, X,\lambda)$ is trivial if
    one of the following is true:
    \begin{enumerate}[a)]
        \item
            ${\displaystyle\int_{M}e^{\lambda}\sigma_{l}\langle\nabla
            \lambda,X\rangle dv_{g}= -\int_{M}e^{\lambda}\langle\nabla
            \sigma_{l}, X\rangle dv_{g}}$ plus any of these:
            \begin{enumerate}[i.]
                \item
                    $\nabla\Ric_g=0$;
                \item
                    $\divergence{C_g}=0$ where $ C_g $ is the Cotton tensor of $
                    (M^n,g) $;
                \item
                    $X=\nabla f$ is a gradient vector field;
            \end{enumerate}
        \item
            ${\displaystyle\int_{M}\langle\nabla h,X\rangle dv_{g}\leqslant0}$
            where $h$ is the Hodge-de Rham potential of $ X $.
    \end{enumerate}
\end{theorem}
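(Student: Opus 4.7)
I would handle part (b) first since it is the shortest. By the Hodge--de Rham decomposition cited in the paper, write $X = \nabla h + Y$ with $\divergence{Y} = 0$. Since $Y$ is divergence-free on the compact manifold $M$, one has $\int_M \langle \nabla h, Y\rangle\,dv_g = -\int_M h\,\divergence{Y}\,dv_g = 0$, so the hypothesis of (b) gives
\[
\int_M |\nabla h|^2\,dv_g = \int_M \langle \nabla h, X\rangle\,dv_g \leqslant 0,
\]
which forces $h$ to be constant. Hence $\divergence{X} = \Delta h = 0$; tracing the soliton equation \eqref{eqeas} against $g$ yields $\divergence{X} = n(\log(\sigma_k/\sigma_l) - \lambda)$, so $\log(\sigma_k/\sigma_l) - \lambda \equiv 0$ and $\mathcal{L}_X g \equiv 0$, i.e., the soliton is trivial.

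For part (a), the first move is to recognize the integral condition as $\int_M \langle \nabla(e^\lambda \sigma_l), X\rangle\,dv_g = 0$, because $\nabla(e^\lambda \sigma_l) = e^\lambda \sigma_l \nabla \lambda + e^\lambda \nabla \sigma_l$. Integration by parts together with $\divergence{X} = nF$, where $F := \log(\sigma_k/\sigma_l) - \lambda$, yields the key identity
\[
\int_M e^\lambda \sigma_l F\,dv_g = 0,
\]
which one pairs with the automatic $\int_M F\,dv_g = 0$ coming from compactness and the trace of \eqref{eqeas}. The aim is to promote these orthogonality relations to $F \equiv 0$ under each of (i)--(iii); the common tool is the conformal Killing identity $\mathcal{L}_X A_g = -\nabla^2 F$, obtained from the infinitesimal conformal transformation laws for $\Ric_g$ and $R_g$ applied to $\mathcal{L}_X g = 2Fg$.

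Under (i), $\nabla \Ric_g = 0$ makes $A_g$ parallel, so the eigenvalues of $g^{-1}A_g$, including $\sigma_1$ and $\log(\sigma_k/\sigma_l) = c$, are constants. Tracing $\mathcal{L}_X A_g = -\nabla^2 F$ with $g^{-1}$ and using $\mathcal{L}_X \sigma_1 = X(\sigma_1) = 0$ gives $\Delta F = -2\sigma_1 F$, equivalently $\Delta \lambda = 2\sigma_1 F$. Multiplying by $e^\lambda$, integrating, and invoking the key identity produces $\int_M e^\lambda |\nabla \lambda|^2\,dv_g = 0$, so $\lambda$, and hence $F = c - \lambda$, is constant; $\int_M F\,dv_g = 0$ then forces $F \equiv 0$. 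Under (iii), $X = \nabla f$ gives $\nabla^2 f = Fg$ and the standard identity $\Ric(\nabla f) = -(n-1)\nabla F$; I would contract the soliton equation with $A_g$, use $\divergence{A_g} = \nabla R_g/[2(n-1)]$, and integrate against $e^\lambda \sigma_l$ to produce a weighted identity that, combined with the key identity, forces $\nabla F \equiv 0$. Case (ii), $\divergence{C_g} = 0$, is analogous but one derivative higher: applying $\nabla^i$ to $\mathcal{L}_X A_g = -\nabla^2 F$ and exploiting the vanishing Cotton divergence cancels the extraneous curvature terms, reducing again to a weighted $L^2$ estimate on $\nabla F$.

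The main obstacle I anticipate is case (ii), where matching the Cotton divergence to the third-order derivatives of $F$ produced by $\mathcal{L}_X A_g = -\nabla^2 F$ requires careful bookkeeping of Lie-derivative--curvature commutators; the other cases all reduce, after the same integration by parts, to coercive weighted $L^2$ estimates that annihilate a gradient.
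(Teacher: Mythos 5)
Your part (b) is correct and coincides with the paper's argument. Your derivation of the key identity $\int_M e^{\lambda}\sigma_l\bigl(\log\tfrac{\sigma_k}{\sigma_l}-\lambda\bigr)\,dv_g=0$ from the integral hypothesis is also exactly the paper's first step. The problem is what you do next. Writing $F=\log\tfrac{\sigma_k}{\sigma_l}-\lambda$, the relation $\int_M F\,dv_g=0$ together with $\int_M e^{\lambda}\sigma_l F\,dv_g=0$ is not enough: the paper's mechanism is to produce the \emph{second} weighted relation $\int_M\sigma_k F\,dv_g=0$, which follows from $\int_M\langle X,\nabla\sigma_k\rangle\,dv_g=0$ (Proposition \ref{lm}, a Kazdan--Warner-type obstruction valid for conformal fields when $C_g=0$). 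Subtracting the two weighted relations gives $\int_M\sigma_l\bigl(\tfrac{\sigma_k}{\sigma_l}-e^{\lambda}\bigr)\bigl(\log\tfrac{\sigma_k}{\sigma_l}-\log e^{\lambda}\bigr)\,dv_g=0$, whose integrand has a fixed sign because $t\mapsto\log t$ is increasing and $\sigma_l$ does not change sign; this is what forces $F\equiv0$. None of this appears in your proposal, and you never isolate the ingredient that makes hypotheses (i) and (ii) usable, namely that on a compact manifold either of them yields $\int_M|C_g|^2\,dv_g=\int_M\nabla_i\Ric_{jk}C_{ijk}\,dv_g=0$, hence $C_g=0$ (using that $C_g$ is trace-free). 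Your case (i) can in fact be salvaged along your lines, since $\nabla\Ric_g=0$ makes every $\sigma_j$ constant, the key identity collapses to $\int_M e^{\lambda}F\,dv_g=0$, and your equation $\Delta\lambda=2\sigma_1F$ then gives $\int_M e^{\lambda}|\nabla\lambda|^2\,dv_g=0$; that is a legitimate, more elementary alternative for that subcase.

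Cases (ii) and (iii), however, have genuine gaps. For (ii) you propose to differentiate $\mathcal{L}_XA_g=-\nabla^2F$ once more and hope the Cotton divergence cancels the curvature terms; you do not carry this out, you flag it yourself as the main obstacle, and it is unnecessary once one knows $\divergence{C_g}=0\Rightarrow C_g=0$ on compact $M$ and invokes Proposition \ref{lm}. For (iii) your plan reduces (after the integration by parts you describe) to an identity of the shape $n\int_MF^2\,dv_g=\tfrac{1}{n-1}\int_M\Ric_g(\nabla f,\nabla f)\,dv_g$, whose right-hand side has no controlled sign, so the ``coercive weighted $L^2$ estimate'' you expect does not materialize. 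The paper instead argues by contradiction: if $f$ is nonconstant, then $\nabla^2f=Fg$ makes $(M^n,g)$ a compact conformal soliton, which by Theorem 1.1 of \cite{catino2012global} must lie in the conformal class of the round sphere, hence is locally conformally flat with $C_g=0$, and the null-Cotton argument above then gives triviality, contradicting nonconstancy of $f$. Without that classification input (or some substitute), your case (iii) does not close.
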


The next two corollaries deal with quotient Yamabe solitons ($ \lambda $ is a
real constant) and constitute direct applications of Theorem \ref{Te1}. In
\cite{bo2018k} Bo \textit{et al.} proved that $\sigma_k/\sigma_l$ must be
constant on any compact and locally conformally flat quotient Yamabe soliton.
We extend Bo's result.

\begin{corollary}\label{T3}
    Let $(M^n,g, X,\lambda)$ be any compact quotient Yamabe soliton with a null
    cotton tensor. Then, $\sigma_{k}/\sigma_{l}$ is constant and, as such, the
    soliton is trivial.
\end{corollary}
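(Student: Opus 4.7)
My plan is to apply Theorem \ref{Te1} with part (a) and condition (ii). Since the Cotton tensor vanishes identically, its divergence is trivially zero, so $\divergence{C_g} = 0$ and hypothesis (ii) is immediate.

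The remaining task is to verify the integral identity in part (a),
\[
\int_M e^{\lambda}\sigma_{l}\langle\nabla\lambda,X\rangle\,dv_g = -\int_M e^{\lambda}\langle\nabla\sigma_{l},X\rangle\,dv_g.
\]
For a quotient Yamabe soliton, $\lambda$ is a real constant, so $\nabla\lambda\equiv 0$ and the left-hand side vanishes automatically. For the right-hand side, I would integrate by parts and use the trace of \eqref{eqeas}, namely $\divergence{X} = n(\log(\sigma_k/\sigma_l) - \lambda)$, to rewrite
\[
-e^{\lambda}\int_M\langle\nabla\sigma_{l},X\rangle\,dv_g = e^{\lambda}\int_M\sigma_{l}\divergence{X}\,dv_g = n\,e^{\lambda}\int_M\sigma_l\bigl(\log(\sigma_k/\sigma_l) - \lambda\bigr)\,dv_g.
\]
The vanishing of this quantity comes from the normalization of $\lambda$ built into the self-similar ansatz for \eqref{flow}: unwinding $g^t = \alpha(t)\psi_t^\ast g_0$ in \eqref{flow} identifies $\lambda$ with $\log r_{k,l}(g_0) = \int_M\sigma_l\log(\sigma_k/\sigma_l)\,dv_g/\int_M\sigma_l\,dv_g$, the weighted mean designed precisely to preserve $\int_M\sigma_l\,dv_g$ along the flow.

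With both ingredients of Theorem \ref{Te1}(a)(ii) in place, the conclusion is $\mathcal{L}_X g = 0$; substituting back into \eqref{eqeas} gives $\log(\sigma_k/\sigma_l)\equiv\lambda$, so $\sigma_k/\sigma_l = e^\lambda$ is constant and the soliton is trivial. The delicate step is justifying the equality $\lambda = \log r_{k,l}(g_0)$, since the compactness constraint $\int_M\divergence{X}\,dv_g = 0$ only forces $\lambda$ to equal the unweighted mean of $\log(\sigma_k/\sigma_l)$; confirming the weighted version requires tracing $\lambda$ back through the self-similar derivation rather than through the soliton equation alone.
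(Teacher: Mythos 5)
Your overall strategy---reduce to Theorem \ref{Te1}(a)(ii), observe that $\nabla\lambda=0$ kills the left-hand side of the integral hypothesis, and then try to show $\int_M\langle\nabla\sigma_l,X\rangle\,dv_g=0$---is the right one, and your verification of condition (ii) is fine. The gap is exactly where you flag it: the identification $\lambda=\log r_{k,l}(g_0)$. A quotient Yamabe soliton is defined by equation \eqref{eqeas} with $\lambda$ a real constant; nothing in that definition ties $\lambda$ to the weighted mean $\log r_{k,l}$. Even under the self-similar ansatz $g^t=\alpha(t)\psi_t^{\ast}(g_0)$, differentiating \eqref{flow} at $t=0$ leaves $\lambda$ differing from $\log r_{k,l}(g_0)$ by a term proportional to $\alpha'(0)$---precisely as in the classical case, where the paper records $\lambda=\alpha'(0)+r_g\neq r_g$. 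And the soliton equation alone, integrated over the compact $M$, only yields the unweighted identity $\int_M\bigl(\log(\sigma_k/\sigma_l)-\lambda\bigr)\,dv_g=0$, which does not imply the $\sigma_l$-weighted version your last display requires. So that step is unsupported, and the argument does not close.

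The missing ingredient is Proposition \ref{lm}, and this is also where the null Cotton tensor hypothesis earns its keep a second time: equation \eqref{eqeas} makes $X$ a conformal field, so with $C_g=0$ Proposition \ref{lm} gives $\int_M\langle X,\nabla\sigma_j\rangle\,dv_g=0$ for every $j$, in particular for $j=l$ (and trivially when $l=0$, since $\sigma_0\equiv1$). That is exactly the vanishing of the right-hand side of the integral hypothesis in Theorem \ref{Te1}(a), with no appeal to any normalization of $\lambda$. With both hypotheses verified, the proof of Theorem \ref{Te1}(a) yields $\sigma_k/\sigma_l=e^{\lambda}$ constant and $\mathcal{L}_Xg=0$, as you conclude. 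In your write-up the Cotton tensor is used only to check $\divergence{C_g}=0$, which is a sign that the real geometric content of that hypothesis was not being exploited.
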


In \cite{catino2012global} Catino \textit{et al.} proved that any compact
gradient $k$-Yamabe soliton with nonnegative Ricci tensor is trivial. Bo
\textit{et al.} \cite{bo2018k} also proved that any compact gradient $k$-Yamabe
soliton with constant negative scalar curvature is trivial. In
\cite{tokura2020triviality} it was shown that any compact gradient $k$-Yamabe
soliton must be trivial. We extend all these results at once.

\begin{corollary}\label{T1}
    Let $(M^n,g, \nabla f,\lambda)$ be any compact quotient gradient Yamabe
    soliton. Then, $\sigma_k/\sigma_l$ is constant and, as such, the soliton is
    trivial.
\end{corollary}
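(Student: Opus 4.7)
My plan is to apply Theorem~\ref{Te1}(a)(iii). Since $X = \nabla f$ is gradient by hypothesis, only the integral condition
\[
    \int_M e^{\lambda}\sigma_l\langle\nabla\lambda, \nabla f\rangle\,dv_g = -\int_M e^{\lambda}\langle\nabla\sigma_l, \nabla f\rangle\,dv_g
\]
requires verification. With $\lambda$ a real constant the left-hand side vanishes, and integration by parts combined with the trace of the soliton equation, $\Delta f = n(\log(\sigma_k/\sigma_l) - \lambda)$, shows this condition is equivalent to the scalar identity
\[
    \int_M \sigma_l\left(\log\frac{\sigma_k}{\sigma_l} - \lambda\right)dv_g = 0,
\]
demanding that the $\sigma_l$-weighted and unweighted means of $\log(\sigma_k/\sigma_l)$ agree.

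This weighted-mean identity is the main obstacle: it cannot be read off from the soliton equation by direct manipulation. My approach is to resolve it via Tashiro's rigidity theorem for concircular scalar fields. The soliton equation $\nabla^2 f = \phi\, g$, with $\phi := \log(\sigma_k/\sigma_l) - \lambda$, presents $f$ as a concircular scalar on the compact manifold $M$, so by Tashiro either (i) $f$ is constant, or (ii) $(M, g)$ is isometric, up to homothety, to a round sphere $(\mathbb{S}^n, g_{\mathbb{S}^n})$. Case (i) immediately gives $\nabla^2 f \equiv 0$ and $\sigma_k/\sigma_l = e^{\lambda}$.

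In case (ii), the sphere example in the introduction shows $\sigma_k$ and $\sigma_l$ are positive real constants on $\mathbb{S}^n$ depending only on $n$, $k$, $l$, so $\phi$ is itself a real constant; integrating $\Delta f = n\phi$ over the compact manifold forces $\phi = 0$, hence $\nabla^2 f \equiv 0$, and $f$ must be constant, contradicting case (ii). Therefore only case (i) occurs: the integral identity holds trivially, Theorem~\ref{Te1}(a)(iii) applies, and the soliton is trivial with $\sigma_k/\sigma_l = e^{\lambda}$ constant.
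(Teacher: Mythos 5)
Your reduction of the integral hypothesis of Theorem \ref{Te1} a) to the identity $\int_M\sigma_l\big(\log\tfrac{\sigma_k}{\sigma_l}-\lambda\big)\,dv_g=0$ is correct, and so is the overall dichotomy ``$f$ constant / $f$ nonconstant''. The gap is in your case (ii): Tashiro's classification of compact manifolds carrying a nonconstant solution of $\nabla^2 f=\varphi g$ does \emph{not} produce an isometry with a round sphere. It only says that $(M^n,g)$ is rotationally symmetric, i.e.\ a warped product $dr^2+\omega(r)^2g_{\mathbb{S}^{n-1}}$ away from the critical points of $f$ --- equivalently, that $g$ lies in the \emph{conformal} class of $g_{\mathbb{S}^n}$ (this is the content of Theorem 1.1 of \cite{catino2012global} as used in this paper). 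An isometry with the round sphere requires the extra Obata-type condition $\varphi=-cf+b$ with $c>0$ constant, which you do not have here. Indeed, any non-round metric of revolution on $\mathbb{S}^n$ admits $f$ with $\nabla^2 f=\omega'(r)g$ and is a counterexample to the statement you invoke; on such a metric $\sigma_k$ and $\sigma_l$ are not constants, so the computation in your case (ii) --- and with it your verification of the integral hypothesis --- collapses.

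The repair is precisely the mechanism already built into the proof of Theorem \ref{Te1} a)\,iii.: if $f$ is nonconstant, conformality with the round sphere makes $(M^n,g)$ locally conformally flat, hence $C_g=0$; since $\nabla f$ is a conformal field, Proposition \ref{lm} (applied with the index $l$) yields $\int_M\langle\nabla f,\nabla\sigma_l\rangle\,dv_g=0$, which for constant $\lambda$ is exactly the required integral condition (and it holds trivially when $f$ is constant). Theorem \ref{Te1} then gives $\sigma_k/\sigma_l=e^{\lambda}$ and $\nabla^2 f=0$, as claimed.
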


Yet another triviality result hold for quotient almost Yamabe solitons if one
drops compacity on $ M^n $ in favor of a decaiment condition on the norm of the
soliton field $ X $.

\begin{theorem}\label{T4}
    Let $(M^n,g,X,\lambda)$ be a complete and noncompact quotient almost Yamabe
    soliton satisfying
    \begin{equation*}
        \int_{M^n\setminus B_{r}(x_{0})}\dfrac{|X|}{d(x,x_{0})}dv_{g}<\infty
        \quad\mbox{and}\quad\mathcal{L}_Xg\geqslant0,
    \end{equation*}
    where $d$ is the distance function with respect to $g$ and $B_{r}(x_{0})$ is
    the ball of radius $r>0$ centered at $x_{0}$. Then, $(M^n,g,X,\lambda)$ is
    trivial.
\end{theorem}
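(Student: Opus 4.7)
The plan is to reduce triviality to showing $\mathrm{div}\,X\equiv0$ and then extract this from the decay hypothesis via a cutoff-and-integrate argument. Taking the trace of the defining equation \eqref{eqeas} gives
\[
    \divergence X = n\left(\log\frac{\sigma_{k}}{\sigma_{l}}-\lambda\right),
\]
so that $\mathcal{L}_{X}g = (2/n)(\divergence X)\,g$. In particular, the pointwise tensor inequality $\mathcal{L}_{X}g\geqslant0$ is equivalent to the scalar inequality $\divergence X\geqslant0$, and triviality $\mathcal{L}_{X}g\equiv0$ amounts to $\divergence X\equiv0$.

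The next step is to exhaust $M^{n}$ by cutoffs tailored to the decay hypothesis. Fix $x_{0}\in M^{n}$ and, exploiting completeness, choose for each $r>0$ a Lipschitz function $\phi_{r}\colon M^{n}\to[0,1]$ with $\phi_{r}\equiv1$ on $B_{r}(x_{0})$, $\phi_{r}\equiv0$ outside $B_{2r}(x_{0})$, and $|\nabla\phi_{r}|\leqslant C/r$ almost everywhere on the annulus $B_{2r}(x_{0})\setminus B_{r}(x_{0})$, where $C>0$ is an absolute constant. Since $\phi_{r}X$ is compactly supported, integration by parts yields
\[
    \int_{M}\phi_{r}\divergence X\,dv_{g}
    \;=\;
    -\int_{B_{2r}(x_{0})\setminus B_{r}(x_{0})}\langle\nabla\phi_{r},X\rangle\,dv_{g}.
\]

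For the heart of the argument, observe that on the annulus $B_{2r}(x_{0})\setminus B_{r}(x_{0})$ one has $d(x,x_{0})\geqslant r$, hence
\[
    |\nabla\phi_{r}|\;\leqslant\;\frac{C}{r}\;\leqslant\;\frac{C}{d(x,x_{0})}.
\]
Cauchy--Schwarz together with this bound gives
\[
    \left|\int_{M}\phi_{r}\divergence X\,dv_{g}\right|
    \;\leqslant\;
    C\int_{M^{n}\setminus B_{r}(x_{0})}\frac{|X|}{d(x,x_{0})}\,dv_{g},
\]
and the first hypothesis ensures that the right-hand side tends to $0$ as $r\to\infty$. On the other hand, $\divergence X\geqslant0$ and $\phi_{r}\nearrow1$, so the monotone convergence theorem yields $\int_{M}\divergence X\,dv_{g}=0$, forcing $\divergence X\equiv0$ on $M^{n}$. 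Therefore $\mathcal{L}_{X}g=0$ and the soliton is trivial.

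The main technical point is the first step, where one must verify that $\mathcal{L}_{X}g\geqslant0$ really does reduce to $\divergence X\geqslant0$; once one notices that \eqref{eqeas} makes $\mathcal{L}_{X}g$ conformal to $g$, this is immediate. The cutoff construction and the monotone convergence step are routine on a complete Riemannian manifold; the only subtlety is guaranteeing the gradient bound $|\nabla\phi_{r}|\leqslant C/r$, which follows by taking $\phi_{r}$ to be a smoothing of $\max\{0,\min\{1,2-d(\cdot,x_{0})/r\}\}$.
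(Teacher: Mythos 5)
Your proof is correct and follows essentially the same route as the paper's: trace the soliton equation to identify $\divergence X$ with $n(\log\frac{\sigma_k}{\sigma_l}-\lambda)$, use a cutoff with $|\nabla\phi_r|\leqslant C/r$ supported in $B_{2r}(x_0)$, integrate by parts, and let the decay hypothesis kill the boundary term. The only loose phrase is the appeal to monotone convergence (the $\phi_r$ need not increase in $r$), but since $\phi_r\geqslant\chi_{B_r(x_0)}$ and $\divergence X\geqslant0$ you still get $0\leqslant\int_{B_r(x_0)}\divergence X\,dv_g\to0$, which is exactly the paper's estimate.
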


Next, we give a suficient condition for a compact quotient gradient almost
Yamabe soliton to be isometric with an Euclidean sphere.

\begin{theorem}\label{Teoo1}
    Let $(M^n,g, \nabla f,\lambda)$ be a nontrivial compact quotient gradient
    almost Yamabe soliton with constant scalar curvature $R_g=R>0$. Then
    $(M^n,g)$ is isometric to the Euclidean sphere $\mathbb{S}^{n}(\sqrt{r})$, $
    r=R/n(n-1) $. Moreover, up to a rescaling the potential $f$ is given by
    $f=h_{v}+c$ where $h_{v}$ is the height function on the sphere and $c$ is a
    real constant.
\end{theorem}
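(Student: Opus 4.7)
The plan is to reduce to Obata's classical rigidity theorem by extracting a clean eigenvalue equation from the soliton identity. Set $\phi := \log(\sigma_{k}/\sigma_{l}) - \lambda$, so that equation \eqref{eq fundamental} reads $\nabla^{2}f = \phi\,g$, and tracing gives $\Delta f = n\phi$. Applying $\nabla^{j}$ to $\nabla_{i}\nabla_{j}f = \phi\,g_{ij}$ and invoking the standard commutator identity $\nabla^{j}\nabla_{i}\nabla_{j}f = \nabla_{i}\Delta f + R_{ij}\nabla^{j}f$, one obtains
\[
    \Ric(\nabla f, \cdot) = -(n-1)\,\nabla\phi.
\]
Taking the divergence of this relation, using the contracted second Bianchi identity $\divergence{\Ric} = \tfrac{1}{2}\nabla R$, together with $R = \mathrm{const.}$, I would then get
\[
    \Delta\phi = -\frac{R}{n-1}\,\phi.
\]

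The key observation is to form the auxiliary function $\psi := \phi + \frac{R}{n(n-1)}\,f$. Using $\Delta f = n\phi$, a direct calculation gives
\[
    \Delta\psi = -\frac{R}{n-1}\phi + \frac{R}{n(n-1)}\cdot n\phi = 0,
\]
so $\psi$ is harmonic on the closed manifold $M^{n}$ and therefore constant, say equal to $c_{0}$. Setting $\tilde{f} := f - \frac{n(n-1)}{R}\,c_{0}$, the soliton equation reduces to the Obata equation
\[
    \nabla^{2}\tilde{f} = -\frac{R}{n(n-1)}\,\tilde{f}\,g.
\]

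The nontriviality assumption forces $\tilde{f}$ to be nonconstant (a constant $\tilde{f}$ would force $\phi\equiv 0$ and hence $\nabla^{2}f = 0$), so Obata's theorem applies and produces the isometry from $(M^{n},g)$ onto the round sphere of the claimed radius. For the description of the potential, on the resulting sphere the solutions of $\nabla^{2}u = -\frac{R}{n(n-1)}\,u\,g$ are precisely the restrictions of linear functions from the ambient Euclidean space; hence $\tilde{f}$ is a constant multiple of some height function $h_{v}$, and after absorbing this multiplicative factor into the \emph{rescaling} mentioned in the statement one recovers $f = h_{v} + c$. The main delicate point, I expect, will be the careful bookkeeping of signs and tensor conventions in the derivation of $\Ric(\nabla f) = -(n-1)\,\nabla\phi$; once that is in place, the harmonicity of $\psi$ is a one-line miracle and Obata's theorem closes the argument.
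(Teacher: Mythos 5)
Your argument is correct and follows essentially the same route as the paper: you rederive the structural identities of Lemma \ref{lema} (the relation $\Ric(\nabla f)=-(n-1)\nabla\phi$ and the eigenvalue equation $\Delta\phi=-\tfrac{R}{n-1}\phi$), observe that $\phi+\tfrac{R}{n(n-1)}f$ is harmonic hence constant by Hopf, reduce to the Obata equation, and invoke Obata's theorem. The only cosmetic difference is in the last step, where you identify $\tilde{f}$ via the solutions of the Hessian equation on the sphere while the paper identifies $\phi=\tfrac{1}{n}\Delta f$ as a first eigenfunction of the Laplacian; both give $f=h_{v}+c$ up to rescaling.
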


\begin{remark}
    A similar result concerning almost Ricci solitons is found in
    \cite{barros2016some}.
\end{remark}

Another situation in which a quotient gradient almost Yamabe soliton must be
isometric with an Euclidean sphere is described below.

\begin{theorem}\label{teosigmak}
    Let $(M^n,g, \nabla f,\lambda)$ be a nontrivial compact quotient gradient
    almost Yamabe soliton with constant $\sigma_{k}$-curvature, for some
    $k=2,\dots,n$, and $A_g>0$. Then, $(M^n,g)$ is isometric with an Euclidean
    sphere $\mathbb{S}^{n}$.
\end{theorem}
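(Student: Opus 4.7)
The plan is to exploit the conformal structure from \eqref{eq fundamental} together with the positivity hypotheses on the Schouten tensor to force $A_g$ to be Einstein, so that Theorem \ref{Teoo1} applies. Setting $\mu := \log(\sigma_k/\sigma_l) - \lambda$, the soliton equation reads $\nabla_i\nabla_j f = \mu\,g_{ij}$ and, since the soliton is nontrivial, $f$ is nonconstant. Commuting covariant derivatives in $\nabla_k\nabla_i\nabla_j f = (\nabla_k\mu)g_{ij}$ via the Ricci identity and tracing once produces the standard compatibility relation
\[
    \Ric_g(\nabla f) = -(n-1)\nabla\mu,
\]
which ties the Ricci tensor to $\mu$ along the gradient direction.

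Next I would bring in the hypotheses. Because $A_g>0$, the Schouten tensor lies in the positive Garding cone $\Gamma_n^+\subset\Gamma_k^+$, so Garding's theorem guarantees that the Newton transformation $T_{k-1}=T_{k-1}(g^{-1}A_g)$ is positive definite and, in particular, that $\sigma_{k-1}>0$ on $M^n$. Contracting $T_{k-1}$ against the soliton equation and using $\textup{tr}(T_{k-1})=(n-k+1)\sigma_{k-1}$ yields the pointwise pairing
\[
    \langle T_{k-1},\,\nabla^2 f\rangle_g = (n-k+1)\mu\,\sigma_{k-1}.
\]
Integration of this identity over $M^n$, combined with the divergence formula $\textup{div}(T_{k-1}(\nabla f)) = \langle T_{k-1},\nabla^2 f\rangle + \langle\textup{div}(T_{k-1}),\nabla f\rangle$ and the Bianchi-type identity for $\textup{div}(T_{k-1})$ that simplifies once $\sigma_k$ is constant (this is exactly the kind of lemma gathered earlier in the paper), produces a global identity of the form $\int_{M^n}\mu\,\sigma_{k-1}\,dv_g = 0$.

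To close the argument I would invoke the Newton-MacLaurin rigidity in $\Gamma_n^+$: the pointwise inequality $(\sigma_k/\binom{n}{k})^{1/k}\leq(\sigma_{k-1}/\binom{n}{k-1})^{1/(k-1)}$ is sharp precisely when $A_g$ is proportional to $g$. Combined with the integral identity, the strict positivity of $\sigma_{k-1}$, and the constancy of $\sigma_k$, this forces $A_g=cg$ on $M^n$ for some positive constant $c$; in particular $R_g = 2n(n-1)c$ is a positive constant, and Theorem \ref{Teoo1} now identifies $(M^n,g)$ isometrically with $\mathbb{S}^n$.

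The main obstacle is the divergence formula for $T_{k-1}$. On a general Riemannian manifold $\textup{div}(T_{k-1})$ involves the Cotton tensor and need not vanish, so the constancy of $\sigma_k$ must be used precisely to absorb that Cotton contribution after integration by parts. A secondary delicate point is converting the integral equality into the pointwise Einstein condition $A_g=cg$: this requires the strict positivity of $\sigma_{k-1}$ supplied by $A_g>0$ so that $\mu\,\sigma_{k-1}$ cannot average to zero through sign cancellation, which is exactly what upgrades the Newton-MacLaurin inequality from its integrated to its pointwise sharp form.
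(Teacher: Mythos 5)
Your proposal diverges from the paper's route and, as written, has two gaps that I do not see how to close. The paper first invokes Theorem 1.1 of \cite{catino2012global}: a nontrivial compact gradient conformal soliton must lie in the conformal class of the round sphere, so one may write $\varphi^{-2}g=g_{\mathbb{S}^n}$ with $\varphi>0$; this is the step that makes the manifold locally conformally flat and hence makes $\divergence{T_k(g^{-1}A_g)}=0$ legitimate. You never invoke this reduction, and your hope that ``the constancy of $\sigma_k$ absorbs the Cotton contribution'' in $\divergence{T_{k-1}}$ is unsubstantiated: for $k-1\geqslant 2$ the divergence of the Newton transformation of the Schouten tensor genuinely involves the Cotton tensor, and constancy of $\sigma_k$ gives no control over it. (For $k=2$ you would be using $T_1$, which is always divergence-free by the contracted Bianchi identity, but that only covers one case.)

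The second and more serious gap is the closing step. Even granting $\int_{M}\mu\,\sigma_{k-1}\,dv_g=0$ with $\mu=\log(\sigma_k/\sigma_l)-\lambda$, this identity carries essentially no information: tracing the soliton equation gives $\Delta f=n\mu$, so $\int_M\mu\,dv_g=0$ and $\mu$ must change sign on a nontrivial compact soliton. Hence $\mu\,\sigma_{k-1}$ has no sign even though $\sigma_{k-1}>0$, and the vanishing of its integral cannot be upgraded to a pointwise statement, let alone to the equality case of the Newton--MacLaurin inequality. Contrast this with the paper's integrand: pairing $T_k(g^{-1}A_g)$ with the Hessian of the \emph{conformal factor} $\varphi$ (not of $f$) via Lemma \ref{KW1} and the conformal transformation law for the Schouten tensor yields
\[
    \int_M\varphi\left[\frac{n-k}{n}\,\sigma_1\sigma_k-(k+1)\sigma_{k+1}\right]dv_g=0,
\]
where the bracket has a fixed sign by Newton's inequality (Lemma 23 of \cite{viaclovsky2000conformal}) and $\varphi>0$, so the bracket vanishes pointwise and the equality case forces $g$ to be Einstein; Theorem \ref{Teoo1} then finishes. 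Your outline is missing both the conformal reduction that produces a signed integrand and the positive weight that converts the integral identity into a pointwise one.
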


Finally we investigate the structure of noncompact quotient gradient almost
Yamabe solitons satisfying reasonable conditions on its potential function
and both Ricci and scalar curvatures.

\begin{theorem}\label{final}
    Let $(M^n,g, \nabla f,\lambda)$ be a nontrivial and noncompact quotient
    gradient  almost Yamabe soliton. Assume that
    \[
        \mathcal{L}_{\nabla f^2}R\geqslant0,\quad
        \overset{\circ}{\Ric}\geqslant0\quad\mbox{and}\quad
        |\overset{\circ}{\Ric}(\nabla f^2)|\in L^{1}(M).
    \]
    Then, $(M^n,g)$ has constant scalar curvature $R_g=R\leqslant0$ and $f$ has
    at most one critical point. Moreover, we have that:
    \begin{enumerate}[a)]
        \item
            If $R=0$, then $(M^n,g)$ is isometric with a Riemannian product
            manifold $(\mathbb{R}\times\mathbb{F}^{n-1},dt^2+g_{\mathbb{F}})$
            such that $ \Ric_{g_{\mathbb{F}}}\geqslant0 $; \vspace{0,1cm}
        \item
            If $R<0$ and $f$ has no critical points, then $(M^n,g)$ is isometric
            with a warped product manifold
            $(\mathbb{R}\times\mathbb{F}^{n-1},dt^2+\xi(t)^2g_{\mathbb{F}})$
            such that
            \[
                \xi''+\frac{R}{n(n-1)}\xi=0;
            \]
            \vspace{0,1cm}
        \item
            If $R<0$ and $f$ has only one critical point, then $(M^n,g)$ is
            isometric with a hyperbolic space.
    \end{enumerate}
\end{theorem}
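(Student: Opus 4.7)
The plan is to exhibit a vector field on $M$ whose divergence is a manifestly non-negative sum of the two given hypotheses, invoke a Stokes-type theorem valid on complete noncompact manifolds to annihilate both summands, and feed the resulting algebraic information into the Tashiro--Kanai warped-product theory for solutions of $\nabla^{2}f=\varphi g$, where $\varphi:=\log(\sigma_{k}/\sigma_{l})-\lambda$.

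First I would establish the divergence identity
\begin{equation*}
    \divergence\bigl(\overset{\circ}{\Ric}(\nabla f^{2})\bigr)
    =\frac{n-2}{2n}\mathcal{L}_{\nabla f^{2}}R + 2\,\overset{\circ}{\Ric}(\nabla f,\nabla f).
\end{equation*}
This is a direct computation using the contracted Bianchi identity $\divergence\overset{\circ}{\Ric}=\tfrac{n-2}{2n}\nabla R$, the formula $\nabla^{2}(f^{2})=2\,df\otimes df+2f\varphi g$ (which comes from the soliton equation), and the fact that $\overset{\circ}{\Ric}$ is trace-free. Both summands on the right are non-negative by hypothesis, and $|\overset{\circ}{\Ric}(\nabla f^{2})|\in L^{1}(M)$ allows one to apply a Caminha--Souza--Camargo type divergence theorem, forcing the left-hand side to vanish identically. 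In particular $\overset{\circ}{\Ric}(\nabla f,\nabla f)=0$, and because $\overset{\circ}{\Ric}\geqslant 0$ this upgrades to $\overset{\circ}{\Ric}(\nabla f)=0$, i.e. $\Ric(\nabla f)=\tfrac{R}{n}\nabla f$.

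Next, commuting covariant derivatives in $\nabla^{2}f=\varphi g$ and tracing yields the classical identity $\Ric(\nabla f)=-(n-1)\nabla\varphi$, so
\begin{equation*}
    \nabla\varphi=-\frac{R}{n(n-1)}\nabla f.
\end{equation*}
Taking the exterior derivative gives $dR\wedge df=0$, while comparing the two computations of $\divergence(\Ric(\nabla f))=\divergence(\tfrac{R}{n}\nabla f)$ (Bianchi on one side, $\Delta f=n\varphi$ on the other) yields $\langle\nabla R,\nabla f\rangle=0$. Together these force $\nabla R\equiv 0$ on the open dense set $\{\nabla f\neq 0\}$, which is connected in each of the cases to be treated, so $R$ is constant on $M$.

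Finally, the Tashiro--Kanai theory for $\nabla^{2}f=\varphi g$ applies: away from the discrete critical set, $|\nabla f|$ is constant on level sets and $(M,g)$ is locally a warped product $I\times_{\xi}\mathbb{F}^{n-1}$ with $f=f(t)$; a first integration of $\nabla\varphi=-\tfrac{R}{n(n-1)}\nabla f$ produces $|\nabla f|^{2}=-\tfrac{R}{n(n-1)}f^{2}+2cf+c'$, and the warping factor obeys $\xi''+\tfrac{R}{n(n-1)}\xi=0$. Combined with the classical fact that two critical points of a conformal function on a complete manifold force a spherical (hence compact) model, the ODE analysis simultaneously rules out $R>0$, shows that $f$ has at most one critical point, and yields the three stated cases: $(a)$ $R=0$ makes $\xi''=0$, and completeness plus positivity of $\xi$ forces $\xi$ constant, giving a Riemannian product, with $\Ric_{g_{\mathbb{F}}}\geqslant 0$ inherited from $\overset{\circ}{\Ric}\geqslant 0$ and $R=0$; $(b)$ $R<0$ with no critical points gives the global warped product directly from Tashiro; $(c)$ $R<0$ with a single critical point gives a polar warped product with $\xi(0)=0$, $\xi'(0)=1$, whose unique solution is a hyperbolic sine, identifying $(M,g)$ with the hyperbolic space. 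The technical heart of the argument is this last step, namely passing rigorously from the local Tashiro description to the global classification in the noncompact setting, and in particular recognising the polar model in case $(c)$ as the hyperbolic space itself rather than merely a warped product satisfying the correct ODE.
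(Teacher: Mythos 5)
Your overall strategy coincides with the paper's: the same divergence identity for $\overset{\circ}{\Ric}(\nabla f^{2})$, the same Caminha-type $L^{1}$ divergence theorem to kill both nonnegative summands, the same reduction to $\nabla\varphi=-\tfrac{R}{n(n-1)}\nabla f$ with $\varphi=\log(\sigma_k/\sigma_l)-\lambda$, and the same Tashiro--Kanai endgame. Your minor deviations are all legitimate and in places cleaner: you obtain $\Ric(\nabla f)=\tfrac{R}{n}\nabla f$ directly from positive semi-definiteness of $\overset{\circ}{\Ric}$ (the paper instead reads it off the warped-product form of the Ricci tensor); you get constancy of $R$ from $dR\wedge df=0$ combined with $\langle\nabla R,\nabla f\rangle=0$ on the dense set $\{\nabla f\neq0\}$ (the paper uses that $R$ depends only on the warped-product parameter $r$); and you exclude $R>0$ via the two-critical-point/spherical-model argument where the paper simply invokes Bonnet--Myers with $\Ric\geqslant\tfrac{R}{n}g$.

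There is, however, one concrete gap, in case (a). When $R=0$ the identity $\nabla\varphi=-\tfrac{R}{n(n-1)}\nabla f$ only gives $\varphi\equiv c$ for some constant $c$, and you cannot yet conclude $\nabla^{2}f=0$. If $c\neq0$, the equation $\nabla^{2}f=cg$ on a complete noncompact manifold is realized by the \emph{polar} warped product $\xi(t)=t$ over $[0,\infty)$, i.e.\ flat Euclidean space with $f$ a quadratic radial function; your assertion that ``completeness plus positivity of $\xi$ forces $\xi$ constant'' silently discards this model, which is not a Riemannian product. The paper closes this hole by observing that flat $\mathbb{R}^{n}$ has vanishing Schouten tensor, hence $\sigma_{1}=\cdots=\sigma_{n}=0$, contradicting the standing requirement $\sigma_{k}\cdot\sigma_{l}>0$ in the soliton equation; only then does $c=0$, $\nabla^{2}f=0$, and Kanai's splitting theorem apply. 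You need this (or an equivalent) exclusion to make case (a) correct.
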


\section{Key Lemmas}\label{sec:lemmas}

\begin{lemma}\label{KW1}
    \textup{(\cite{barbosa2020generalized, gover2013universal})}
    Let $ (M^n,g) $ be a compact Riemannian manifold with a possibly empty
    boundary $ \partial M $. Then,
    \[
        \int_{M}X(\trace{T})dv_{g}=
        n\int_{M}\divergence{T(X)}dv_{g}+
        \frac{n}{2}
        \int_{M}\langle \overset{\circ}{T},\mathcal{L}_{X}g\rangle dv_{g}-
        n\int_{\partial M}\overset{\circ}{T}(X,\nu)ds_{g},
    \]
    for every symmetric $ (0,2) $-tensor $ T $ and every vector field $ X $ on $
    M $, where
    \[
        \trace{T}=g^{ij}T_{ij}\quad\mbox{and}\quad
        \overset{\circ}{T}=T-\frac{\trace{T}}{n}g,
    \]
    and $\nu$ is the outward unit normal field on $ \partial M $.
\end{lemma}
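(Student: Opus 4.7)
The plan is to convert the three hypotheses into a pointwise divergence identity whose right-hand side is a sum of non-negative quantities, apply an $L^{1}$-Stokes theorem valid on complete non-compact manifolds (in the spirit of Karp) to force the divergence to vanish identically, and then combine the resulting differential identities with the classification of complete manifolds admitting a non-trivial concircular potential $\nabla^{2}f=\varphi g$ (Tashiro) to deduce the stated rigidity.

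First I would set $\varphi:=\log(\sigma_{k}/\sigma_{l})-\lambda$, so that \eqref{eq fundamental} becomes $\nabla^{2}f=\varphi g$. Using the twice-contracted Bianchi identity $\divergence{\Ric}=\tfrac{1}{2}\nabla R$ together with the product rule applied to $\overset{\circ}{\Ric}_{ij}\nabla^{j}(f^{2})$, a short calculation gives
\[
\divergence{\overset{\circ}{\Ric}(\nabla f^{2})}
=
\frac{n-2}{2n}\,\mathcal{L}_{\nabla f^{2}}R
\,+\,
2\,\overset{\circ}{\Ric}(\nabla f,\nabla f),
\]
where the Hessian contribution drops out because $\langle\overset{\circ}{\Ric},\varphi g\rangle=\varphi\,\trace{\overset{\circ}{\Ric}}=0$. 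The hypotheses force both summands on the right to be non-negative, and the bound $|\overset{\circ}{\Ric}(\nabla f^{2})|\in L^{1}(M)$ permits Karp's $L^{1}$-Stokes theorem to apply, yielding pointwise vanishing of the divergence. Thus $\overset{\circ}{\Ric}(\nabla f,\nabla f)\equiv 0$, and Cauchy--Schwarz for the positive-semidefinite $\overset{\circ}{\Ric}$ upgrades this to $\overset{\circ}{\Ric}(\nabla f,\cdot)\equiv 0$, i.e.\ $\Ric(\nabla f)=\tfrac{R}{n}\nabla f$.

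Next I would use the standard identity $\Ric(\nabla f)=-(n-1)\nabla\varphi$, obtained from $\nabla^{2}f=\varphi g$ via the Ricci identity, to get $\nabla\varphi=-\tfrac{R}{n(n-1)}\nabla f$, whence $dR\wedge df=0$. Computing $\divergence{\Ric(\nabla f)}$ in two ways (the product rule combined with twice-contracted Bianchi on one side, and $\divergence{(\tfrac{R}{n}\nabla f)}$ on the other) forces $\langle\nabla R,\nabla f\rangle=0$. These two identities together give $\nabla R\equiv 0$ on the open set $\{\nabla f\neq 0\}$; a Tashiro-type rigidity (a non-trivial concircular potential on a complete non-compact manifold admits at most one critical point) makes that set dense, so $R$ is constant on $M$ and $f$ has at most one critical point, as claimed. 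Since $\overset{\circ}{\Ric}\geqslant 0$ reads $\Ric\geqslant\tfrac{R}{n}g$, the case $R>0$ would give $M$ compact by Bonnet--Myers, contradicting our hypothesis; hence $R\leqslant 0$.

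To finish, I would appeal to Tashiro's classification of complete Riemannian manifolds carrying a non-trivial solution of $\nabla^{2}f=\varphi g$. When $R=0$, $\varphi$ is constant and the non-triviality forces a critical point of $f$, identifying $(M,g)$ with flat $\mathbb{R}^{n}\cong\mathbb{R}\times\mathbb{R}^{n-1}$, which has $\Ric_{\mathbb{F}}=0\geqslant 0$ and delivers~(a). When $R<0$ and $\nabla f$ has no zero, the flow of $\nabla f/|\nabla f|$ produces a warped-product splitting $(\mathbb{R}\times_{\xi}\mathbb{F}, dt^{2}+\xi(t)^{2}g_{\mathbb{F}})$ with $f=f(t)$; substituting into $\Ric(\nabla f)=\tfrac{R}{n}\nabla f$ yields exactly the ODE $\xi''+\tfrac{R}{n(n-1)}\xi=0$, which is~(b). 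If in addition $f$ has its critical point, Tashiro's theorem forces $\mathbb{F}=\mathbb{S}^{n-1}$ with $\xi(0)=0$, $\xi'(0)=1$, and the unique solution $\xi(t)=\alpha^{-1}\sinh(\alpha t)$, $\alpha^{2}=-R/(n(n-1))$, identifies $M$ with a hyperbolic space of curvature $-\alpha^{2}$, yielding~(c). The main technical obstacle I expect is a careful justification of Karp's $L^{1}$-Stokes step for the specific $1$-form $\overset{\circ}{\Ric}(\nabla f^{2},\cdot)$; once that step is secured, the remaining deductions are formal consequences of identities for concircular potentials together with Tashiro's classification.
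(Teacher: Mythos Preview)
Your proposal does not address the stated lemma at all. Lemma~\ref{KW1} is a general integral identity valid for \emph{any} symmetric $(0,2)$-tensor $T$ and \emph{any} vector field $X$ on a compact manifold with boundary; it has nothing to do with Schouten curvatures, quotient solitons, potential functions, Tashiro's theorem, or the hypotheses $\mathcal{L}_{\nabla f^{2}}R\geqslant 0$, $\overset{\circ}{\Ric}\geqslant 0$, $|\overset{\circ}{\Ric}(\nabla f^{2})|\in L^{1}(M)$. What you have written is a sketch of the proof of Theorem~\ref{final}, not of Lemma~\ref{KW1}. The two statements are entirely different in nature: one is an elementary tensorial integration-by-parts identity, the other is a global rigidity theorem for noncompact solitons.

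The actual proof of Lemma~\ref{KW1} is a short computation. One expands $\nabla^{i}(T_{ij}X^{j})=(\divergence T)(X)+\tfrac{1}{2}\langle T,\mathcal{L}_{X}g\rangle$ using the symmetry of $T$, integrates this over $M$ with the divergence theorem to produce the boundary term $\int_{\partial M}T(X,\nu)$, then splits $T=\overset{\circ}{T}+\tfrac{\trace T}{n}g$ and integrates the trace piece $\tfrac{1}{n}\int_{M}\trace T\cdot\divergence X$ by parts once more. Rearranging yields the stated formula. None of the soliton-specific machinery in your proposal is relevant here.
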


\begin{proof}
    First notice that integration by parts yields
    \[
        \int_{\partial M}T(X,\nu)dA_{g}=\int_{M}\nabla^{i}(T_{ij}X^{j})dv_{g},
    \]
    and because
    \begin{align*}
        \nabla^{i}(T_{ij}X^{j})
        &=
        \nabla^{i}T_{ij}X^{j}+T_{ij}\nabla^{i}X^{j}
        \\ &=
        \nabla^{i}T_{ij}X^{j}+\frac{1}{2}T_{ij}(\nabla^{i}X^{j}+\nabla^{j}X^{i})
        \\ &=
        \divergence{T(X)}+\frac{1}{2}\langle T,\mathcal{L}_{X}g\rangle,
    \end{align*}
    we get that
    \begin{equation}\label{eq2222}
        \begin{split}
            \int_{\partial M}T(X,\nu)dA_{g}
            &=
            \int_{M}\nabla^{i}(T_{ij}X^{j})dv_{g}
            \\
            &=
            \int_{M} \divergence{T(X)} dv_{g}+
            \frac{1}{2}\int_{M}\langle T,\mathcal{L}_{X}g\rangle dv_{g}
            \\
            &=
            \int_{M} \divergence{T(X)} dv_{g}+
            \frac{1}{2}
            \int_{M}\langle \overset{\circ}{T},\mathcal{L}_{X}g\rangle dv_{g}+
            \frac{1}{2}
            \int_{M}\frac{\trace{T}}{n}\langle g,\mathcal{L}_{X}g\rangle dv_{g}
            \\
            &=
            \int_{M} \divergence{T(X)} dv_{g}+
            \frac{1}{2}
            \int_{M}\langle \overset{\circ}{T},\mathcal{L}_{X}g\rangle dv_{g}+
            \frac{1}{n}\int_{M}\trace{T}\cdot\divergence{X}dv_{g}.
        \end{split}
    \end{equation}
    On the other hand, we have that
    \begin{equation}\label{eq222}
        \int_{M}\trace{T}\cdot\divergence{X}dv_{g}=
        \int_{\partial M}
        \trace{T}\cdot\langle X,\nu\rangle dA_{g}-\int_{M}X(\trace{T})
        dv_{g}.
    \end{equation}
    The result now follows from \eqref{eq2222} and \eqref{eq222} above.
\end{proof}

\begin{lemma}\label{naza}
    \textup{(\cite{barros2013compact})} Let $ (M^n,g) $ be a Riemannian manifold
    and $ T $ be a symmetric $ (0,2) $-tensor field on $ M^n $. Then,
    \[
        \divergence(T(\varphi X))=
        \varphi(\divergence{T})(X)+
        \varphi \langle\nabla X, T\rangle+
        T(\nabla\varphi,X),
    \]
    for each $X\in\mathfrak{X}(M)$ and each $\varphi\in C^\infty(M)$ where $
    T(X) $ is the vector field $ g $-equivalent to $ T $.
\end{lemma}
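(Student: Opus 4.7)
The claim is a Leibniz-type identity, so the plan is to prove it by a short computation in local coordinates, using only compatibility of the Levi-Civita connection with $g$. The key is to split the computation into two independent steps: a scalar-vector product rule for $\divergence$, and a companion rule that expresses $\divergence(T(X))$ in terms of $\divergence T$ and $\nabla X$. Since $T(X)$ is defined by raising an index of $T$ and contracting against $X$, the map $X \mapsto T(X)$ is $C^{\infty}(M)$-linear, so $T(\varphi X) = \varphi\, T(X)$.

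First, I would apply the elementary identity
\[
    \divergence(\varphi Y) = \varphi\, \divergence Y + \langle \nabla\varphi, Y\rangle,
\]
to the vector field $Y = T(X)$. The boundary term $\langle \nabla\varphi, T(X)\rangle$ can be rewritten as $T(\nabla\varphi, X)$ by unwinding the definition of $T(X)$ through the musical isomorphism and using that $T$ is symmetric; this already supplies the third summand in the claimed formula.

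Second, I would establish the companion identity
\[
    \divergence(T(X)) = (\divergence T)(X) + \langle \nabla X, T\rangle.
\]
In coordinates this is nothing more than $\nabla^{i}(T_{ij}X^{j}) = (\nabla^{i}T_{ij})X^{j} + T_{ij}\nabla^{i}X^{j}$, where the first term is by definition $(\divergence T)(X)$ and the second is the full contraction identified with $\langle \nabla X, T\rangle$ under the standard pairing between a symmetric $(0,2)$-tensor and a $(1,1)$-tensor. In fact this is the same opening line used inside the proof of Lemma \ref{KW1}, so I can simply quote that computation. Multiplying this identity through by $\varphi$ and adding the result of the previous step yields the formula.

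There is no substantive obstacle here; the only care required is bookkeeping of conventions, namely which slot of $T$ is being contracted with $X$, how $\langle \nabla X, T\rangle$ is interpreted as a full trace, and the appeal to symmetry of $T$ needed to identify $\langle \nabla\varphi, T(X)\rangle$ with $T(\nabla\varphi, X)$. Once those are pinned down the two steps combine mechanically.
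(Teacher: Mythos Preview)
Your argument is correct. The paper does not actually supply a proof of this lemma; it simply cites \cite{barros2013compact} and moves on. Your two-step approach---first applying the scalar product rule $\divergence(\varphi Y)=\varphi\,\divergence Y+\langle\nabla\varphi,Y\rangle$ to $Y=T(X)$, then invoking the coordinate expansion $\nabla^{i}(T_{ij}X^{j})=(\nabla^{i}T_{ij})X^{j}+T_{ij}\nabla^{i}X^{j}$, which as you observe is exactly the computation already carried out inside the proof of Lemma~\ref{KW1}---is the standard derivation and would serve perfectly well as a self-contained proof had the paper chosen to include one.
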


For locally conformally flat manifolds, a proposition similiar to the next one
can be found in \cite{han2006kazdan}. Recall that a vector field $ X $ on a
Riemannian manifold $ (M^n,g) $ is a conformal field in case
\[
    \frac{1}{2}\mathcal{L}_Xg=\varphi g,
\]
for some $ \varphi\in C^\infty(M) $.
\begin{proposition}\label{lm}
    If $ X $ is a conformal vector field on a compact Riemannian manifold
    $(M^n,g)$ with null Cotton tensor, then
    \[
        \int_{M^n}\langle X,\nabla \sigma_{k}\rangle dv_{g}=0,
    \]
    for every $ k=1,2,\ldots,n $.
\end{proposition}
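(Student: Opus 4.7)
The plan is to apply Lemma \ref{KW1} with $T$ taken to be the Newton transformations $T_k$ of the Schouten tensor $A_g$, regarded as symmetric $(0,2)$-tensors. Two classical identities drive the argument: the trace formula $\trace{T_k}=(n-k)\sigma_k$, and the divergence-free property $\divergence{T_k}=0$ which holds whenever $A_g$ is a Codazzi tensor. Our hypothesis $C_g=0$ is precisely the Codazzi condition $\nabla_i A_{jk}=\nabla_j A_{ik}$, so I would first check that $T_k$ is divergence-free by a short induction on $k$ using the recursion $T_k=\sigma_k I-A_g\cdot T_{k-1}$.

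For $1\leqslant k\leqslant n-1$, I would apply Lemma \ref{KW1} with $T=T_k$. The boundary term is absent because $M$ is closed; the divergence term vanishes thanks to $\divergence{T_k}=0$; and the term involving $\mathcal{L}_X g$ vanishes since $X$ is conformal, say $\tfrac{1}{2}\mathcal{L}_X g=\varphi g$, whence $\langle\mathring{T}_k,\mathcal{L}_X g\rangle=2\varphi\,\trace{\mathring{T}_k}=0$ by the trace-freeness of $\mathring{T}_k$. Lemma \ref{KW1} therefore simplifies to
\[
    (n-k)\int_{M}\langle X,\nabla\sigma_{k}\rangle\,dv_{g}=0,
\]
and the conclusion follows whenever $n-k>0$.

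The remaining case $k=n$ is degenerate for the Newton-transformation approach, since $T_n\equiv0$ by Cayley--Hamilton, so I would handle it with a separate linearization argument. Linearizing the standard conformal-change formula for the Schouten tensor along the flow of $X$ gives $\mathcal{L}_X A_g=-\nabla^{2}\varphi$; combining with $\mathcal{L}_X g^{-1}=-2\varphi g^{-1}$ and the derivative identity $d\sigma_{k}(B)\cdot H=\trace{(T_{k-1}(B)H)}$ (where $B=g^{-1}A_g$) yields
\[
    X(\sigma_{n})=-2n\varphi\sigma_{n}-\langle T_{n-1},\nabla^{2}\varphi\rangle.
\]
I would then integrate this identity, cancel the Hessian term using $\divergence{T_{n-1}}=0$ and integration by parts, and compare with the divergence-theorem identity $\int_{M}X(\sigma_{n})\,dv_{g}=-n\int_{M}\sigma_{n}\varphi\,dv_{g}$ (valid because $\divergence{X}=n\varphi$ for conformal $X$) to obtain $n\int_{M}\sigma_{n}\varphi\,dv_{g}=0$, and hence $\int_{M}\langle X,\nabla\sigma_{n}\rangle\,dv_{g}=0$ as well.

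The hard part will be justifying $\divergence{T_k}=0$ under the Codazzi hypothesis on $A_g$; this classical identity requires careful index bookkeeping and depends on the contracted Bianchi-type identity $\nabla^{i}A_{ij}=\nabla_{j}\sigma_{1}$. The remainder of the argument is essentially organizational, matching Lemma \ref{KW1} to the conformal and Codazzi inputs, and peeling off the top-degree case $k=n$ with the linearization trick.
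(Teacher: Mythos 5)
Your proof is correct and, for $1\leqslant k\leqslant n-1$, is exactly the paper's argument: Lemma \ref{KW1} applied to $T_k(g^{-1}A_g)$, with $\trace{T_k}=(n-k)\sigma_k$, the conformality of $X$ killing the $\langle\overset{\circ}{T_k},\mathcal{L}_Xg\rangle$ term, and $\divergence{T_k}=0$ --- the latter being precisely where the null-Cotton (Codazzi) hypothesis enters, as you correctly flag. For $k=n$ the paper simply quotes Han's divergence identity $n\langle X,\nabla\sigma_n\rangle=\nabla_a\left[T^a_b\nabla^b(\divergence{X})+2n\sigma_nX^a\right]$ and integrates it, whereas you re-derive its integrated form from the linearizations $\mathcal{L}_XA_g=-\nabla^2\varphi$ and $\mathcal{L}_Xg^{-1}=-2\varphi g^{-1}$; your computation checks out (it yields $-n\int_M\sigma_n\varphi\,dv_g=-2n\int_M\sigma_n\varphi\,dv_g$, hence $\int_M\sigma_n\varphi\,dv_g=0$) and is a correct, self-contained substitute for the citation.
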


Recall that the $k$-Newton tensor field associated with $g^{-1}A_g$ is defined
by
\[
    T_{k}(g^{-1}A_g)=\sum_{j=0}^k\,(-1)^j\sigma_{k-j}(g)(g^{-1}A_g)^j,
    \quad1\leqslant k\leqslant n.
\]
Among the identities satisfied by $ T_k(g^{-1}A_g) $ one finds (see
\cite{barros2016some})
\[
    \trace{T_k}(g^{-1}A_g)=(n-k)\sigma_k(g)\quad\mbox{and}\quad
    \divergence{T_k}(g^{-1}A_g)=0,
\]
for every $\leqslant k\leqslant n$.

\begin{proof}
    Let $ \varphi\in C^\infty(M) $ be such that
    \[
        \frac{1}{2}\mathcal{L}_Xg=\varphi g,
    \]
    and take $ T_k=T_k(g^{-1}A_g) $ where $ k\in\{1,2,\ldots,n-1\} $. Now a
    direct application of Lemma \ref{KW1} yelds
    \begin{equation}\label{eqdd}
        \int_{M}X(\trace{T_k}) dv_{g}=
        n\int_{M} \divergence{T_k(X)}dv_{g}+
        n\int_{M}\varphi\langle \overset{\circ}{T_k},g\rangle dv_{g}.
    \end{equation}
    It follows from Corollary 1 of \cite{barros2016some} that $
    \divergence{T_k}=0 $ and because
    \[
        \overset{\circ}{T_k}
        =
        T_k-
        \frac{\trace{T_k}}{n}g
        =
        T_k-
        \frac{n-k}{n}\sigma_{k}g,
    \]
    equation \eqref{eqdd} can rewritten in the simpler form
    \[
        (n-k)\int_{M}\langle X,\nabla\sigma_{k}\rangle dv_{g}=0,
    \]
    which proves the proposition in case $k\neq n$. As for the remaining case,
    it follows from \cite{han2006kazdan} that
    \[
        n\langle X,\nabla\sigma_{n}\rangle=
        \nabla_{a}
        \left[
            T_{b}^{a}\nabla^{b}(\divergence{X})+2n\sigma_{n}X^a
        \right],
    \]
    where $T_{b}^{a}$ are the components of $T_{n-1}(g^{-1}A_g)$. Therefore,
    if we go there and write
    \[
        Y^{a}=T_{b}^{a}\nabla^{b}(\divergence{X})+2n\sigma_{n}X^a,
    \]
    we get that
    \[
        n\int_{M}\langle X,\nabla\sigma_{k}\rangle dv_{g}=
        \int_{M}\nabla_{a}Y^{a}dv_{g}=0,
    \]
    which proves the proposition also for $k=n$.
\end{proof}

Our next Lemma states structural equations for quotient gradient almost Yamabe
solitons.

\begin{lemma}\label{lema}
    Let $(M^n,g,\nabla f,\lambda)$ be a quotient gradient almost Yamabe soliton.
    Then, we have that:
    \begin{enumerate}[a)]
        \item
            $\Delta f=
            n\left(\log \dfrac{\sigma_{k}}{\sigma_{l}}-
            \lambda\right)$;
        \item
            $(n-1)\nabla\left(\log\dfrac{\sigma_{k}}{\sigma_{l}}-
            \lambda\right)+\Ric(\nabla f)=0$;
        \item
            $(n-1)\Delta\left(\log\dfrac{\sigma_{k}}{\sigma_{l}}-
            \lambda\right)+
            \dfrac{1}{2}\langle\nabla R,\nabla f\rangle+
            \left(\log\dfrac{\sigma_{k}}{\sigma_{l}}-
            \lambda\right)R=0$.
    \end{enumerate}
\end{lemma}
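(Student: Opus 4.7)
Let me abbreviate $\mu := \log(\sigma_k/\sigma_l) - \lambda$, so that the defining equation \eqref{eq fundamental} simply reads $\nabla^2 f = \mu\, g$. The three parts will follow from tracing, differentiating once, and differentiating twice, respectively.

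Part (a) is immediate: taking the trace of $\nabla_i\nabla_j f = \mu g_{ij}$ and using $g^{ij}g_{ij} = n$ yields $\Delta f = n\mu$, which is exactly the stated identity.

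For part (b), the plan is to apply a divergence to the fundamental equation and then swap covariant derivatives by means of the Ricci identity. Concretely, I contract $\nabla_i\nabla_j f = \mu g_{ij}$ with $\nabla^j$. The right-hand side collapses to $\nabla_i \mu$ because $\nabla^j g_{ij} = 0$. For the left-hand side, using the symmetry of the Hessian and the standard commutation formula
\[
\nabla^{j}\nabla_{i}\nabla_{j}f \;=\; \nabla_{i}\Delta f + R_{ij}\nabla^{j}f,
\]
I obtain $\nabla_i \mu = \nabla_i \Delta f + R_{ij}\nabla^j f$. Substituting $\Delta f = n\mu$ from part (a) and rearranging gives $(n-1)\nabla\mu + \mathrm{Ric}(\nabla f) = 0$, as required.

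For part (c), the strategy is to take the divergence of the identity in (b). The first term produces $(n-1)\Delta\mu$ immediately. For the second, I expand
\[
\mathrm{div}\bigl(\mathrm{Ric}(\nabla f)\bigr)\;=\;(\nabla^{i}R_{ij})\nabla^{j}f + R_{ij}\nabla^{i}\nabla^{j}f,
\]
apply the contracted second Bianchi identity $\nabla^{i}R_{ij} = \tfrac{1}{2}\nabla_{j}R$ to the first piece, and substitute $\nabla^{i}\nabla^{j}f = \mu g^{ij}$ from the fundamental equation into the second piece to get $\mu R$. Assembling everything yields the stated formula. The only real subtlety in this entire proof is tracking sign conventions correctly in the Ricci commutator used in part (b); once that identity is in hand, everything reduces to routine algebra combined with the contracted Bianchi identity and the defining equation.
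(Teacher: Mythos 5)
Your proof is correct and follows essentially the same route as the paper: trace the fundamental equation for (a), take a divergence of the Hessian equation and commute derivatives via the Ricci identity for (b), and take the divergence of (b) using the contracted second Bianchi identity together with the fundamental equation for (c). The commutation formula $\nabla^{j}\nabla_{i}\nabla_{j}f=\nabla_{i}\Delta f+R_{ij}\nabla^{j}f$ you invoke is exactly the contracted Ricci identity the paper uses, so there is no substantive difference.
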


\begin{proof}
    \begin{enumerate}[a)]
        \item
            Simply take traces at equation \eqref{eq fundamental};
        \item
            Next, we differentiate \eqref{eq fundamental} to get
            \[
                \nabla_{j}\nabla_{r}\nabla_{i}f=
                \nabla_{j}\left(\log\frac{\sigma_{k}}{\sigma_{l}}-
                \lambda\right)g_{ri},
            \]
            from what we see that
            \[
                \nabla_{i}\nabla_{j}\nabla_{r}f+
                \sum_{s}R_{rijs}\nabla_{s}f=
                \nabla_{j}\left(\log\frac{\sigma_{k}}{\sigma_{l}}-
                \lambda\right)g_{ri},
            \]
            with the help of the Ricci identity that can be found in
            (\cite{aubin2013some}, pg. 4). Now we only need to contract this
            equation on the indices $j,r$ in order to get
            \[
                \nabla_{i}\Delta f+
                \sum_{s}Ric_{is}\nabla_{s}f=
                \nabla_{i}\left(\log\frac{\sigma_{k}}{\sigma_{l}}-
                \lambda\right),
            \]
            then yelding
            \begin{equation}\label{derivada}
                (n-1)\nabla_{i}\left(\log\frac{\sigma_{k}}{\sigma_{l}}-
                \lambda\right)+\sum_{s}Ric_{is}\nabla_{s}f=0,
            \end{equation}
            by $ \textup{a)} $, which proves $\textup{b)}$;
        \item
            As for the remaining identity, we apply the divergence operator on
            both sides of \eqref{derivada} and use the twice contracted second
            Bianchi identity to obtain
            \[
                (n-1)\Delta
                \left(
                \log\frac{\sigma_{k}}{\sigma_{l}}-\lambda
                \right)+
                \frac{1}{2}\langle\nabla R,\nabla f\rangle+
                \sum_{sl}Ric_{sl}\nabla_{s}\nabla_{l}f=0,
            \]
            which is equivalent to
            \[
                (n-1)\Delta
                \left(
                \log\frac{\sigma_{k}}{\sigma_{l}}-\lambda
                \right)+
                \frac{1}{2}\langle\nabla R,\nabla f\rangle+
                \left(
                \log\frac{\sigma_{k}}{\sigma_{l}}-\lambda
                \right)R
                =0,
            \]
            by the fundamental equation \eqref{eq fundamental}, which proves $
            \textup{c)} $.
    \end{enumerate}
    This concludes the proof.
\end{proof}

\section{Proofs of the main results}\label{sec:proofs}

\begin{myproof}{Te1}
    \begin{enumerate}[a)]
        \item
            We get
            \[
                \int_{M}\Ric_{jk}\nabla_{i}C_{ijk}dv_{g}=
                -\int_{M}\nabla_{i}\Ric_{jk}C_{ijk}dv_{g}=0,
            \]
            if either $\nabla\Ric_g=0$ or $\divergence{C_g}=0$ and because
            \begin{equation}
                \begin{split}
                    \int_{M}\nabla_{i}\Ric_{jk}C_{ijk}dv_{g}
                    &=
                    \int_{M}
                    \left[
                        C_{ijk}+
                        \frac{1}{2(n-1)}
                        \left(
                        g_{jk}\nabla_{i}R_g-g_{ij}\nabla_{j}R_g
                        \right)
                        \right]C_{ijk}dv_{g}
                    \\
                    &=
                    \int_{M}|C_g|^{2}dv_{g}+
                    \frac{1}{2(n-1)}
                    \int_{M}
                    \left(
                    C_{ijk}g_{jk}\nabla_{i}R_g-
                    C_{ijk}g_{ij}\nabla_{j}R_g
                    \right)dv_{g}
                    \\
                    &=\int_{M}|C_g|^{2}dv_{g},
                \end{split}
            \end{equation}
            we conclude that $C_g=0$. Equation \eqref{eq fundamental} implies
            that $ X $ is a conformal field and so we can apply Proposition
            \ref{lm} to conclude that
            \begin{equation*}
                \int_{M^n}
                \sigma_{k}\left(\log\frac{\sigma_{k}}{\sigma_{l}}-\lambda\right)
                dv_{g}
                =
                -\dfrac{1}{n}
                \int_{M^n}
                \langle\nabla\sigma_{k},X\rangle
                dv_{g}
                =0.
            \end{equation*}
            Therefore, we have that
            \begin{equation}\label{eqeq}
                \begin{split}
                    \int_{M^n}
                    \frac{\sigma_{l}}{n}
                    \left(
                    \frac{\sigma_{k}}{\sigma_{l}}-e^\lambda
                    \right)
                    \left(
                    \log\frac{\sigma_{k}}{\sigma_{l}}-\lambda
                    \right)
                    dv_{g}
                    &=
                    -\int_{M^n}
                    \frac{e^{\lambda}\sigma_{l}}{n}
                    \left(
                    \log\frac{\sigma_{k}}{\sigma_{l}}-\lambda
                    \right)
                    dv_{g}
                    \\
                    &=
                    \int_{M^n}
                    e^{\lambda}\sigma_{l}\langle \nabla \lambda, X\rangle
                    dv_{g}+
                    \int_{M^n}
                    e^{\lambda}\langle \nabla \sigma_{l}, X\rangle
                    dv_{g}=0,
                \end{split}
            \end{equation}
            by our hypothesis on the nullity of the integral at the right hand
            of \eqref{eqeq}. Because $\sigma_{l}\neq0$ does not change signs on
            $ M^n $ we must then admit that $\sigma_{k}/\sigma_{l}=e^\lambda$,
            which proves our assertion in case $ \Ric_g $ is parallel or $ C_g $
            is divergence free. Instead, if $ X=\nabla f $, we argue by
            contradiction to show that $ f $ is a constant function. Should $f$
            not be constant on $ M^n $, the manifold $(M^n,g)$ could not lie in
            any conformal class other than that of the Euclidean sphere
            $(\mathbb{S}^n,g_{\mathbb{S}^n})$, by Theorem 1.1 of
            \cite{catino2012global}. So, just as it happens with any locally
            conformally flat manifold, the Cotton tensor of $(M^n,g)$ would then
            vanish identically and by what has been said above $ (M^n,g,\nabla
            f,\lambda) $ ought to be trivial. This contradiction shows that $ f
            $ is indeed a constant function, now concluding \textup{a)};
        \item
            Because the fields $ \nabla h,Y $ in the Hodge-de Rham decomposition
            $ X=\nabla h+Y $ of $ X $ are orthogonal to one another in $
            L^2(M) $ we get that
            \begin{equation*}
                \int_{M^n}|\nabla h|^{2}dv_{g}=
                \int_{M^n}\langle\nabla h,\nabla h+Y\rangle dv_{g}=
                \int_{M^n}\langle\nabla h, X\rangle dv_{g}\leqslant0,
            \end{equation*}
            the inequality being a part of the hypothesis. Then, $ \nabla h=0 $
            and $ X=Y $. Since $Y$ is divergence free, we conclude that
            \[
                n\left(\log\frac{\sigma_{k}}{\sigma_{l}}-\lambda\right)=
                \divergence{X}=0,
            \]
            and, as such, the soliton is trivial.
    \end{enumerate}
    This proves the Theorem.
\end{myproof}

\begin{myproof}{T4}
    As we already know, the fundamental equation
    \[
        \dfrac{1}{2}\mathcal{L}_Xg=
        \left(\log{\dfrac{\sigma_k}{\sigma_l}}-\lambda\right)g,
    \]
    leads to
    \begin{equation}\label{trace}
        \divergence{X}=n\left(\log \dfrac{\sigma_{k}}{\sigma_{l}}-\lambda\right),
    \end{equation}
    and because we suppose that $ \mathcal{L}_Xg\geqslant0 $ we must then admit
    that $ \log{\frac{\sigma_k}{\sigma_l}}-\lambda\geqslant0 $. So, if we now
    take a cut-off function $\psi:M\rightarrow\mathbb{R}$ satisfying
    \begin{equation*}
        0\leqslant\psi\leqslant1\mbox{ on } M,\quad
        \psi\equiv 1\hspace{0,2cm}\text{in}\hspace{0,2cm} B_{r}(x_{0}),\quad
        \textup{supp}{(\psi)}\subset B_{2r}(x_{0})\quad \text{and}\quad
        |\nabla\psi|\leqslant\frac{K}{r},
    \end{equation*}
    where $ K>0 $ is a real constant, we are in place to conclude that
    \begin{equation*}
        \begin{split}
            n\int_{B_{r}(x_{0})}
            \left(
            \log\frac{\sigma_{k}}{\sigma_{l}}-\lambda
            \right)
            dv_{g}
            &=
            \int_{B_r(x_0)}
            n\psi\left(\log{\frac{\sigma_k}{\sigma_l}}-\lambda\right)
            dv_g
            \\
            &\leqslant
            \int_{B_{2r}(x_0)}
            n\psi\left(\log{\frac{\sigma_k}{\sigma_l}}-\lambda\right)
            dv_g
            \\
            &=
            \int_{B_{2r}(x_{0})}\psi\,\divergence{X}dv_{g}
            \\
            &=
            -\int_{B_{2r}(x_{0})}g(\nabla\psi,X)dv_{g}
            \\
            &
            \leqslant
            \int_{B_{2r}(x_{0})\setminus B_{r}(x_{0})}|-\nabla\psi||X|dv_{g}
            \\
            &
            \leqslant
            K\int_{B_{2r}(x_{0})\setminus B_{r}(x_{0})}\frac{|X|}{r}dv_{g},
            \\
            &
            \leqslant
            2K\int_{M\setminus B_r(x_0)}\dfrac{|X|}{d(x,x_0)}dv_g,
        \end{split}
    \end{equation*}
    from what it follows that
    \begin{align*}
        0\leqslant
        \int_M
        \left(
        \log\frac{\sigma_{k}}{\sigma_{l}}-\lambda
        \right)
        dv_{g}
        &=
        \lim_{r\to\infty}
        \int_{B_{r}(x_{0})}
        \left(
        \log\frac{\sigma_{k}}{\sigma_{l}}-\lambda
        \right)
        dv_{g}
        \\
        &\leqslant
        \frac{2K}{n}\lim_{r\to\infty}
        \int_{M\setminus B_r(x_0)}\frac{|X|}{d(x,x_0)}dv_g=0.
    \end{align*}
    Henceforth, we have that $
    \mathcal{L}_Xg=\log{\frac{\sigma_k}{\sigma_l}}-\lambda=0 $ which proves the
    Theorem.
\end{myproof}

\begin{myproof}{Teoo1}
    It follows from Lemma \ref{lema} $\textup{(c)}$ that if the scalar curvature
    of $ (M^n,g,\nabla f,\lambda) $ is a constant function on $ M^n $, then
    \begin{equation}\label{eq:-1}
        \Delta
        \left(
        \log\dfrac{\sigma_{k}}{\sigma_{l}}-\lambda
        \right)+
        \frac{R}{n-1}
        \left(
        \log\dfrac{\sigma_{k}}{\sigma_{l}}-\lambda
        \right)=0,
    \end{equation}
    and, by the min-max principle, we must have $ R>0 $. Because we know that
    \begin{equation}\label{eq:-2}
        \Delta f=n
        \left(
        \log\dfrac{\sigma_{k}}{\sigma_{l}}-\lambda
        \right),
    \end{equation}
    we then get
    \[
        \Delta
        \left(
        \log\dfrac{\sigma_{k}}{\sigma_{l}}-\lambda+\frac{R}{n(n-1)}f
        \right)
        =0,
    \]
    and since $ (M^n,g) $ is a compact Riemannian manifold, we see that
    \[
        \log\dfrac{\sigma_{k}}{\sigma_{l}}-\lambda+\frac{R}{n(n-1)}f=c
        \quad\mbox{on}\quad M^n,
    \]
    for a certain $ c\in\mathbb{R} $, by a Theorem of E. Hopf. But, then
    \[
        \nabla
        \left(
        \log\dfrac{\sigma_{k}}{\sigma_{l}}-\lambda
        \right)
        +
        \frac{R}{n(n-1)}\nabla f
        =0,
    \]
    and so
    \[
        \nabla_{X}\nabla
        \left(
        \log\dfrac{\sigma_{k}}{\sigma_{l}}-\lambda
        \right)
        =
        -\frac{R}{n(n-1)}\nabla_{X}\nabla f
        =
        -\frac{R}{n(n-1)}
        \left(
        \log\frac{\sigma_{k}}{\sigma_{l}}-\lambda
        \right)X.
    \]
    We can now apply Theorem A from Obata \cite{obata1962certain} to conclude
    that $(M^n,g)$ is isometric with an Euclidean sphere of radius $ \sqrt{r} $,
    $r=R/n(n-1)$. To prove our last claim we notice that we can assume that
    $R=n(n-1)$ possibly at the cost of rescaling the metric $ g $. From
    equations \eqref{eq:-1} and \eqref{eq:-2} it's seen that $\frac{\Delta
    f}{n}$ is an eigenfunction of the Laplacian on $(\mathbb{S}^n,g)$ and so
    there must exist a $v\in\mathbb{S}^n$ such that $\frac{1}{n}\Delta
    f=h_{v}=-\frac{1}{n}\Delta h_{v}$. Hence, $\Delta (f+ h_{v})=0$ but then
    $f=h_{v}+c$ for some real $c$.
\end{myproof}

\begin{myproof}{teosigmak}
    By Theorem 1.1 of \cite{catino2012global} the only nontrivial compact
    quotient gradient almost Yamabe solitons reside in the conformal class of
    the Euclidean sphere and because of that we can assume that
    \[
        M^n=\mathbb{S}^n\quad\mbox{and}\quad\varphi^{-2}g=g_{\mathbb{S}^n},
    \]
    where $ \varphi\in C^\infty(\mathbb{S}^n) $ is strictly positive. Then, the
    Ricci tensors of $ g $ and $ g_{\mathbb{S}^n} $ are correlated by the
    equation \cite{besse2007einstein}
    \[
        \Ric_{\mathbb{S}^n}
        =
        \Ric_{g}
        +
        \frac{1}{\varphi^2}
        \big{\{}
        (n-2)\varphi\nabla^{2}\varphi
        +
        [\varphi\Delta\varphi-(n-1)|\nabla \varphi|^2]g
        \big{\}},
    \]
    which we algebraically manipulate in order to get the similar equation
    \begin{equation}\label{eq:conformal-schouten-sphere}
        A_{g_{\mathbb{S}^n}}
        =
        A_{g}
        +
        \frac{\nabla^2\varphi}{\varphi}
        -
        \frac{1}{2}\frac{|\nabla\varphi|^2}{\varphi^2}g,
    \end{equation}
    for the Schouten tensors. But then we have
    \[
        \frac{1}{2}
        \left(
        \varphi^2+\frac{|\nabla\varphi|^2}{\varphi^2}
        \right)g=
        A_{g}+\frac{\nabla^2\varphi}{\varphi},
    \]
    from what it follows that
    \begin{equation}\label{hessiana}
        \nabla^2\varphi=
        \varphi
        \left[
            -A_g+
            \frac{1}{n}\left(\sigma_1(g)+\frac{\Delta\varphi}{\varphi}\right)g
            \right].
    \end{equation}
    Notice that Lemma \ref{KW1} applied to $
    T=T_k(g^{-1}A_g) $ and $ X=\nabla\varphi $ gives
    \begin{equation}\label{inte}
        \int_{M}\langle T_{k}(g^{-1}A_{g}),\nabla^{2}\varphi\rangle dv_{g}=0,
    \end{equation}
    because $ \trace{T_k(g^{-1}A_g)}=(n-k)\sigma_k(g) $ is constant on $
    \mathbb{S}^n $ by hypothesis and $ \divergence{T_k(g^{-1}A_g)}=0 $. A
    combination of \eqref{inte} and \eqref{hessiana} above leads to
    \begin{align*}
        0&=
        \int_{M}
        \langle
        T_{k}(g^{-1}A_{g}),
        -\varphi A_{g}+
        \frac{\sigma_1(g)\varphi+\Delta\varphi}{n}g
        \rangle
        dv_{g}=0
        \\
        &=
        \int_M
        \left[
            -\varphi\langle T_k(g^{-1}A_g),A_g\rangle+
            \frac{\sigma_1(g)\varphi+\Delta\varphi}{n}
            \langle T_k(g^{-1}A_g),g\rangle
            \right]
        dv_g
        \\
        &=
        \int_M
        \varphi
        \left[
            \left(\frac{n-k}{n}\right)\sigma_1(g)\sigma_k(g)-
            (k+1)\sigma_{k+1}(g)
            \right]
        dv_g
    \end{align*}
    where we have used the identity $\trace{T_k(g^{-1}A_g\circ
    A_g)}=(k+1)\sigma_{k+1}(g)$ \cite{reilly1974hessian}. By Lemma 23 of
    \cite{viaclovsky2000conformal} we conclude that
    \[
        \left(\frac{n-k}{n}\right)\sigma_{1}\sigma_{k}=(k+1)\sigma_{k+1},
    \]
    implying that $(\mathbb{S}^n,g)$ is an Einstein manifold. In particular, the
    scalar curvature of $ g $ is constant on $ \mathbb{S}^n $ and by Theorem
    \ref{Teoo1} there is even an isometry between $(\mathbb{S}^n,g)$ and $
    (\mathbb{S}^n,g_{\mathbb{S}^n}) $ which proves the Theorem.
\end{myproof}

\begin{myproof}{final}
    Lemma \ref{naza} applied to the data $T=\overset{\circ}{\Ric_g}$, $X=\nabla
    f$ and $\varphi=f$ gives
    \begin{equation}\label{eqsei}
        \divergence{\overset{\circ}{\Ric_g}(f\nabla f)}
        =
        f(\divergence{\overset{\circ}{\Ric_g})(\nabla f)}
        +
        f \langle\nabla^2 f, \overset{\circ}{\Ric_g}\rangle
        +
        \overset{\circ}{\Ric_g}(\nabla f.\nabla f),
    \end{equation}
    and it then follows from the second contracted Bianchi identity that
    \begin{equation}\label{eqsei1}
        (\divergence{\overset{\circ}{\Ric_g})(\nabla f)}
        =
        \frac{n-2}{2n}\langle\nabla f,\nabla R\rangle.
    \end{equation}
    A straightforward computation shows that
    \begin{equation}\label{eqsei2}
        f \langle\nabla^2 f,\overset{\circ}{\Ric_g}\rangle
        =
        f\left(
        \log\frac{\sigma_{k}}{\sigma_{l}}-\lambda
        \right)
        \langle g,\overset{\circ}{\Ric_g}\rangle
        =0,
    \end{equation}
    and equations \eqref{eqsei}, \eqref{eqsei1} and \eqref{eqsei2} together give
    \begin{equation}\label{eqsei3}
        \frac{1}{2}\divergence{\overset{\circ}{\Ric_g}(\nabla f^2)}
        =
        \frac{n-2}{4n}\langle\nabla R_g,\nabla f^2\rangle
        +
        \overset{\circ}{\Ric_g}(\nabla f,\nabla f).
    \end{equation}
    Proposition 1 of \cite{caminha2010complete} tell us that
    $\divergence{\overset{\circ}{\Ric_g}(\nabla f^2)}=0$ because
    $|\overset{\circ}{\Ric_g}(\nabla f^2)|\in L^{1}(M)$. Consequently,
    \[
        \langle\nabla R_g,\nabla f^2\rangle=0\quad\mbox{and}\quad
        \overset{\circ}{\Ric_g}(\nabla f,\nabla f)=0.
    \]
    As $(M^n,g,\nabla f,\lambda)$ is a nontrivial quotient gradient almost
    Yamabe soliton, any regular level set $\Sigma$ of the potential function $f$
    admits a maximal open neighborhood $U\subset M$ in which $ g $ can be
    written like
    \begin{equation}\label{dfdf}
        g=dr\otimes dr+(f'(r))^2g^{\Sigma},
    \end{equation}
    where $g^{\Sigma}$ is the restriction of $g$ to $\Sigma$ (see
    \cite{catino2012global}). Since $M$ is noncompact, $f$ has at most one
    critical point. As the Ricci tensor of a warped product metric, $ \Ric_g $
    now admits the following decomposition
    \begin{equation}\label{ricci}
        \Ric_g=
        \Ric^{\Sigma}
        -
        (n-1)\frac{f^{'''}}{f^{'}}dr\otimes dr
        -
        [(n-2)(f'')^2+f'f''']g^{\Sigma},
    \end{equation}
    thus giving $\frac{R_g}{n}=-(n-1)\frac{f'''}{f'}$ because
    $\overset{\circ}{\Ric_g}(\nabla f,\nabla f)=0$. Equation \eqref{ricci} can
    also be manipulated to show that
    \[
        \Ric_g(\nabla f)=\frac{R_g}{n}\nabla f,
    \]
    of which
    \begin{equation}\label{nabla}
        \nabla
        \left(
        \log\dfrac{\sigma_{k}}{\sigma_{l}}-\lambda
        \right)
        +
        \frac{R_g}{n(n-1)}\nabla f
        =0,
    \end{equation}
    is a consequence by Lemma \ref{lema} $ \textup{b)} $. The divergence of
    equation \eqref{nabla} is
    \begin{equation}\label{lapla}
        \Delta
        \left(
        \log\dfrac{\sigma_{k}}{\sigma_{l}}
        -\lambda
        \right)
        +
        \frac{1}{n(n-1)}
        \langle\nabla R_g,\nabla f\rangle
        +
        \frac{R_g}{n-1}
        \left(
        \log\dfrac{\sigma_{k}}{\sigma_{l}}
        -
        \lambda
        \right)
        =0.
    \end{equation}
    which we compare with the expression in Lemma \ref{lema} $ \textup{c)} $ to
    see that $\langle\nabla R_g,\nabla f\rangle=0$. Since $R_g$ only depends on
    $r$ we get that
    \[
        f'R_g'
        =
        f'\langle\nabla R_g,\partial r\rangle
        =
        \langle\nabla R_g,\nabla f\rangle
        =0,
    \]
    implying that the scalar curvature $R_g=R$ is constant. We claim that
    $R\leqslant0$. As a matter of fact, if we had $R>0$, we would then have
    \[
        \Ric_g\geqslant\frac{R}{n}g>\frac{R}{2n}g,
    \]
    because $\overset{\circ}{\Ric_g}\geqslant0$ by hypothesis and the manifold $
    M^n $ would then be compact by the Bonnet-Myers Theorem. Therefore, $
    R\leqslant0 $.
    \begin{enumerate}[a)]
        \item
            It follows from \eqref{nabla} that $
            \log{\frac{\sigma_k}{\sigma_l}}-\lambda=c $ for some $
            c\in\mathbb{R} $ because we now have $ R=0 $. By Theorem 2 of
            \cite{tashiro1965complete} $ (M^n,g) $ must be isometric with flat
            Euclidean space $ \mathbb{R}^n $ in case $ c\neq0 $. Since this
            would leave us with $ \sigma_1(g)=\sigma_2(g)=\cdots=\sigma_n(g)=0
            $, the function $ \log{\frac{\sigma_k}{\sigma_l}} $ could not be
            defined. Then, $ c=0 $ and so $ \nabla^2 f=0 $ by the fundamental
            equation \eqref{eq fundamental}. Theorem B of Kanai
            \cite{kanai1983differential} then implies that $ (M^n,g) $ is
            isometric with a Riemannian product manifold $
            \mathbb{R}\times\mathbb{F}^{n-1} $. Notice that $ \Ric_g\geqslant0 $
            forces $ \mathbb{F}^{n-1} $ to have a nonnegative Ricci curvature;
        \item
            If $f$ has no critical points and $ R<0 $ then once more by
            \eqref{nabla} we get that
            $\log\frac{\sigma_{k}}{\sigma_{l}}-\lambda$ is not constant on $ M^n
            $ and satisfies
            \[
                \nabla_{X}\nabla
                \left(
                    \log\dfrac{\sigma_{k}}{\sigma_{l}}-\lambda
                \right)
                =
                -\frac{R}{n(n-1)}\nabla_{X}\nabla f
                =
                -\frac{R}{n(n-1)}
                \left(
                    \log\dfrac{\sigma_{k}}{\sigma_{l}}-\lambda
                \right)X,
            \]
            on $ M^n $ for every $ X\in\mathfrak{X}(M) $. In virtue of Theorem D
            in \cite{kanai1983differential} the manifold $(M^n,g)$ is isometric
            with a warped product manifold
            $(\mathbb{R}\times\mathbb{F}^{n-1},dr^2+\xi(r)^2g_\mathbb{F})$ in
            which the warping function $\xi$ solves the second order linear ODE
            with constant coefficients $\xi''+\frac{R}{n(n-1)}\xi=0$;
        \item
            In our last call to equation \eqref{nabla} we observe that if $f$
            has exactly one critical point and $ R<0 $ then
            $\log\frac{\sigma_{k}}{\sigma_{l}}-\lambda$ is not constant on $ M^n
            $ and must satisfy
            \[
                \nabla_{X}\nabla
                \left(
                    \log\dfrac{\sigma_{k}}{\sigma_{l}}-\lambda
                \right)
                =
                -\frac{R}{n(n-1)}\nabla_{X}\nabla f
                =
                -\frac{R}{n(n-1)}
                \left(
                    \log\dfrac{\sigma_{k}}{\sigma_{l}}-\lambda
                \right)X,
            \]
            on $ M^n $ for every $ X\in\mathfrak{X}(M) $. We then apply Theorem
            C in \cite{kanai1983differential} to conclude that $(M^n,g)$ is
            isometric with a hyperbolic space.
    \end{enumerate}
\end{myproof}

\bibliographystyle{abbrv}
\bibliography{refs}

\end{document}